\newtheorem{thm}{Theorem}[section]
\newtheorem{prop}{Proposition}[section]
\newtheorem{exam}{Example}[section]
\newtheorem{assum}{Assumption}[section]
\newtheorem{rem}{Remark}[section]
\newenvironment{proof}{\noindent {\bf Proof. }}{\hfill $\Box$ \newline\par}
\title{KKT-based primal-dual exactness conditions for the Shor relaxation}
\author{M.Locatelli$^{1}$ \\
       { \small $^{1}$Dipartimento di Ingegneria e Architettura, Universit\`a di Parma}\\{ \small Parco Area delle Scienze, 181/A, Parma, Italy} \\
  %      \small $^{2}$University B \\
}
\date{}
\begin{document}
\maketitle
\begin{abstract}
In this work we present some exactness conditions for the Shor relaxation of diagonal QCQPs, which extend the conditions introduced in different recent papers about the same topic.
It is shown that the Shor relaxation is equivalent to two convex quadratic relaxations. Then, sufficient conditions for the exactness of the relaxations are derived from their KKT systems. It will be shown that, in some cases, by this derivation previous conditions in the literature, which can be viewed as {\em dual} conditions, since they only involve the Lagrange multipliers appearing in the KKT systems, 
%or {\em primal} conditions, since they only involve the original variables of the problems, 
can be extended to {\em primal-dual} conditions, which also involve the primal variables appearing in the KKT systems.
\end{abstract}
{\bf Keywords:} Quadratically Constrained Quadratic Programming, Shor Relaxation, Convex Relaxations, Exactness Conditions.
\section{Introduction}
In the recent literature different results about the exactness of the Shor relaxation (see \cite{Shor87}) for Quadratically Constrained Quadratic Programming (QCQP in what follows) problems have been proposed. The Shor relaxation can be proved to be exact for the Generalized Trust Region Subproblem (GTRS), where a single (not necessarily convex) quadratic inequality constraint is present. The exactness proof can be derived from a result discussed in \cite{Fradkov79}.
For other QCQPs the Shor relaxation is not always exact and different papers introduce conditions under which exactness holds for sub-classes of QCQPs. Some exactness results for the case of QCQPs with two quadratic constraints have been presented in \cite{Ye03}, while in \cite{Ai09} a necessary and sufficient condition for the exactness of the related Lagrangian dual has been given. Note that the case with two quadratic constraints, which includes the well known Celis-Dennis-Tapia (CDT) problem, has been recently proved to be polynomially solvable in different works \cite{Bienstock16,Consolini17,Sakaue16}. However, both the polynomial approaches proposed in \cite{Consolini17,Sakaue16}, based on the enumeration of all KKT points via the solution of bivariate polynomial systems, and the polynomial approach proposed in \cite{Bienstock16}, based on Barvinok's construction, have a limited practical applicability due to the large exponent of the polynomials appearing in the complexity result.
For QCQPs with a single unit ball constraint and further linear constraints, in \cite{Jeyakumar14} a dimension condition establishing exactness of the Shor relaxation is introduced. In \cite{BenTal14} a Second Order Cone Programming (SOCP) relaxation for the same problem has been discussed, while in \cite{Locatelli16} it has been shown that such relaxation is equivalent to the Shor relaxation. By the analysis of the KKT conditions for the SOCP relaxation, in \cite{Locatelli16} a condition more general than the dimension condition presented in \cite{Jeyakumar14} has been given. Note that in \cite{Burer13,Sturm03} an exact convex relaxation, obtained by adding to the Shor relaxation a so called SOC-RLT constraint, has been introduced, while in \cite{Burer15} the result has been extended to a generic number of linear constraints provided that these constraints have an empty intersection inside the unit ball. It is also worthwhile to mention that a polynomial-time algorithm for the solution of this problem (possibly also with the addition of further ball and reverse ball constraints) has been proposed under the assumption that the overall number of constraints is fixed. The approach is based on an enumeration of all possible KKT points. 
\newline\newline\noindent 
In this paper we are interested in deriving exactness conditions of the Shor relaxation in case of diagonal QCQPs, i.e., quadratic problems where the Hessian of all quadratic functions is diagonal or can be made diagonal after a change of variables (the Hessian matrices are simultaneously diagonalizable). In what follows we assume that the QCQP problem is already given in diagonal form.
Throughout the paper $N=\{1,\ldots,n\}$ will be the index set of the variables, and $M=\{1,\ldots,m\}$ will be the index set of the constraints. For a given symmetric matrix ${\bf Y}$, the notation ${\bf Y}\succeq {\bf O}$ means that the matrix is positive semidefinite. By $diag({\bf Y})$ we will denote the vector whose entries are the diagonal entries of matrix ${\bf Y}$. 
Then, 
a diagonal QCQP problem is the following:
\begin{equation}
\label{eq:qpproblem}
\begin{array}{lll}
c^\star=\min & {\bf x}^\top{\bf D} {\bf x} + 2 {\bf c}^\top{\bf x} & \\ [6pt]
& {\bf x}^\top {\bf A}^i {\bf x} + 2 {\bf a}_i^\top {\bf x} \leq b_i & i\in M,
\end{array}
\end{equation}
where matrix ${\bf D}$ and all matrices ${\bf A}_i$, $i\in M$, are diagonal. 
The classical 
Shor relaxation for this problem is:
\begin{equation}
\label{eq:Shor}
\begin{array}{lll}
v^\star=\min & {\bf D}\bullet {\bf X} + 2 {\bf c}^\top{\bf x} & \\ [6pt]
& {\bf A}^i \bullet {\bf X} + 2 {\bf a}_i^\top {\bf x} \leq b_i & i\in M \\ [6pt]
& {\bf X}-{\bf x}{\bf x}^\top \succeq {\bf O}.
\end{array}
\end{equation}
In \cite{BurerYe19} suitable assumptions are first introduced, namely:
\begin{assum}
\label{ass:1}
The following hold:
\begin{itemize}
\item The feasible region of (\ref{eq:qpproblem}) is nonempty;
\item $\exists\ \bar{{\bf y}}\geq {\bf 0}$ such that $\sum_{i\in M} \bar{y}_i {\bf A}_i \succ {\bf O}$;
\item The interior of the feasible region of (\ref{eq:Shor}) is nonempty.
\end{itemize}
\end{assum}
Then, some sufficient conditions are introduced under which there exists an optimal rank-one solution for the Shor relaxation, which is equivalent to prove that the Shor relaxation is exact, i.e., $v^\star=c^\star$. 
More precisely, in \cite{BurerYe19} it is proved that the
Shor relaxation is exact if
for each $k\in N$, the polyhedral set
%\begin{equation}
\begin{subequations}
\label{eq:setcond}
 %\begin{array}{ll}
\begin{align}
D_{kk} + \sum_{i\in M} \mu_i A^i_{kk}  =0 \label{eq:setcond1}\\ %[6pt]
c_k +\sum_{i\in M} \mu_i a_{ik} = 0 \label{eq:setcond2}\\ %[8pt]
D_{jj} + \sum_{i\in M} \mu_i A^i_{jj} \geq 0 &\ \ \  j\in N,\ j\neq k \label{eq:setcond3}\\ %[6pt]
\mu_i\geq 0 &\ \ \  i\in M, \label{eq:setcond4}
%\end{array}
\end{align}
\end{subequations}
%\end{equation}
is empty.
This result allows to re-derive a sign-definiteness condition presented in \cite{Sojoudi14},
stating that exactness holds if for all $j\in N$, it holds that $c_j$ and $a_{ij}$, $i\in M$, are all nonpositive or all nonnegative.
Moreover, 
for the relevant special case when ${\bf A}_i \in \{{\bf I},-{\bf I}, {\bf O}\}$ for each $i\in M$, i.e., when all constraints are ball, reverse ball, and linear constraints, in \cite{BurerYe19} it is shown that
exactness holds when the sign-definite condition is only satisfied by the variable corresponding to the lowest diagonal entry of matrix ${\bf D}$. Note that this special case is addressed also in \cite{Beck17}, where a branch-and-bound approach for its solution is proposed and an application to source localization problems is presented.
A further very recent result has been proved in \cite{Karzan20}, where a class of problems larger than the class of diagonal QCQPs is considered. We briefly discuss the condition introduced in that paper, only in the case of inequality constraints, although also equality constraints may be included. Note that in this case matrices ${\bf D}$ and ${\bf A}_i$, $i\in M$, are not necessarily diagonal. 
Let
$$
{\bf A}(\boldsymbol{\gamma})={\bf D}+\sum_{i\in M}\gamma_i {\bf A}_i,\ \ \ {\bf b}(\boldsymbol{\gamma})={\bf c}+\sum_{i\in M} \gamma_i {\bf a}_i.
$$
Let
$$
\Gamma=\{\boldsymbol{\gamma}:\  {\bf A}(\boldsymbol{\gamma})\succeq {\bf O},\ \boldsymbol{\gamma}\geq {\bf 0}\}.
$$
A face ${\cal F}$ of $\Gamma$ which does not contain any $\boldsymbol{\gamma}$ such that ${\bf A}(\boldsymbol{\gamma})\succ {\bf O}$ is called a semidefinite face, and the zero eigenspace of ${\cal F}$ is
$$
{\cal V}({\cal F})=\{{\bf x}\ :\ {\bf A}(\boldsymbol{\gamma}){\bf x}={\bf 0},\ \forall \boldsymbol{\gamma}\in {\cal F}\}.
$$
In \cite{Karzan20} it is assumed that $\Gamma$ is a polyhedral set. While this assumption is always fulfilled for diagonal QCQPs, it is also shown that it may hold also for non-diagonal QCQPs, but it is pointed out
that it is CoNP-hard to decide whether the assumption holds in general. Exactness of the Shor relaxation is proved under the condition that 
there exists some infinite sequence $\{{\bf h}^k\}$ such that ${\bf h}^k\rightarrow {\bf 0}$ (see the perturbation argument below) and for any $k$ and any semidefinite face ${\cal F}$ it holds that:
$$
{\bf 0}\not\in \{Proj_{{\cal V}({\cal F})} (b(\boldsymbol{\gamma})+{\bf h}^k),\ \ \boldsymbol{\gamma}\geq {\bf 0}\}.
$$
Note that in the same paper also some conditions are discussed under which 
the convex hull of the epigraph of the QCQP is given by the projection of the epigraph of its Shor relaxation.
Another recent result about this topic can be found in \cite{Jeyakumar18}. In that work minimax QCQPs are  considered, namely, the following problems are addressed
\begin{equation}
\label{eq:minimax}
\begin{array}{lll}
\min & \max_{r\in R} {\bf x}^\top{\bf D}^r {\bf x} + 2 {\bf c}_r^\top{\bf x} +c_{0r} & \\ [6pt]
& {\bf x}^\top {\bf A}^i {\bf x} + 2 {\bf a}_i^\top {\bf x} \leq b_i & i\in M,
\end{array}
\end{equation} 
where all matrices ${\bf D}^r$, $r\in R$, ${\bf A}^i$, $i\in M$, are diagonal ones, possibly obtained after the simultaneous diagonalization of all the Hessian matrices. Note that this class of problems is equivalent to the class of problems (\ref{eq:qpproblem}). Indeed, each problem (\ref{eq:qpproblem}) can be viewed as a special case of (\ref{eq:minimax}) by taking $|R|=1$, while, on the other hand, each problem (\ref{eq:minimax}) can be converted into an instance of problem
(\ref{eq:qpproblem}) after the addition of a variable $y$, which becomes the objective function to be minimized, and of the related constraints
$y\geq {\bf x}^\top{\bf D}^r {\bf x} + 2 {\bf c}_r^\top{\bf x} +c_{0r}$ for each $r\in R$.
In \cite{Jeyakumar18} a SOCP relaxation of problem (\ref{eq:minimax}) is introduced which is equivalent to the Lagrangian dual of this problem and, thus, also to the Shor relaxation (recall that the Lagrangian dual and the Shor relaxation are dual to each other and, thus, have the same optimal value if a constraint qualification holds).
In \cite{Jeyakumar18} an exactness condition is introduced based on the so called epigraphical set, defined as follows:
\begin{equation}
\label{eq:epigraph}
\begin{array}{ll}
E=\left\{({\bf w}, {\bf v})\in \mathbb{R}^{|R|+|M|}\ :\ \exists {\bf x}\in \mathbb{R}^{|N|}\ :\ \right. &
{\bf x}^\top{\bf D}^r {\bf x} + 2 {\bf c}_r^\top{\bf x} +c_{0r}\leq w_r,\ r\in R,  \\ [6pt]
& \left. {\bf x}^\top {\bf A}^i {\bf x} + 2 {\bf a}_i^\top {\bf x}\leq v_i,\ i\in M\right\}.
\end{array}
\end{equation}
%\begin{equation}
%\label{eq:epigraph}
%E=\{({\bf w}, {\bf v})\in \mathbb{R}^{|R|+|M|}\ :\ \exists {\bf x}\in \mathbb{R}^{|N|}\ :\ 
%{\bf x}^\top{\bf D}^r {\bf x} + 2 {\bf c}_r^\top{\bf x} +c_{0r}\leq w_r,\ r\in R,\ {\bf x}^\top {\bf A}^i {\bf x} + 2 {\bf a}_i^\top {\bf x}\leq v_i,\ i\in M\}.
%\end{equation}
It is shown that the SOCP relaxation is exact if the epigraphical set is closed and convex.
\newline\newline\noindent
In this paper we first show in Section \ref{sec:equivconv}, by a straightforward extension of the result proved in \cite{Locatelli16}, that for diagonal QCQPs the Shor relaxation is equivalent to a quadratic convex relaxation of problem (\ref{eq:qpproblem}).
Next, in Section \ref{sec:suff} the exactness condition related to the emptiness of the sets (\ref{eq:setcond}) is re-derived through an analysis of the KKT conditions of the convex relaxation. Moreover, it is shown how to strengthen the exactness condition in some cases and, in particular, in the already mentioned case when  ${\bf A}_i \in \{{\bf I},-{\bf I}, {\bf O}\}$ for each $i\in M$. It is shown through an example that
the new condition can be stronger than those discussed in \cite{BurerYe19}, \cite{Jeyakumar18} and \cite{Karzan20}.
Finally, in Section \ref{sec:further} a further equivalent convex relaxation is introduced and it is shown that KKT conditions for this relaxation allow to define an exactness condition which can be more efficiently checked.
\section{A convex relaxation equivalent to the Shor relaxation}  
\label{sec:equivconv}
We first observe that, in view of the fact that all matrices are diagonal, we can reformulate problem (\ref{eq:qpproblem}) as follows:
$$
\begin{array}{lll}
\min & \sum_{j\in N} D_ {jj} z_j+ 2 \sum_{j\in N} c_j x_j & \\ [6pt]
& \sum_{j\in N} A^i_ {jj} z_j  + 2 \sum_{j\in N} a_{ij} x_j \leq b_i & i\in M \\[6pt]
& x_j^2 = z_j & j\in N.
\end{array}
$$
Thus, following \cite{BenTal14}, a convex relaxation can be obtained by replacing the last equality constraints with convex inequality constraints:
\begin{equation}
\label{eq:convrel}
\begin{array}{lll}
p^\star=\min & \sum_{j\in N} D_ {jj} z_j+ 2 \sum_{j\in N} c_j x_j & \\ [6pt]
& \sum_{j\in N} A^i_ {jj} z_j  + 2 \sum_{j\in N} a_{ij} x_j \leq b_i & i\in M \\[6pt]
& x_j^2 \leq z_j & j\in N.
\end{array}
\end{equation}
In \cite{Locatelli16} it was proved the equivalence between this relaxation and the Shor relaxation when ${\bf A}_1={\bf I}$, ${\bf a}_1={\bf 0}$, ${\bf A}_i={\bf O}$ for all $i\in M\setminus \{1\}$. The result can be extended in a quite straightforward way to the general problem (\ref{eq:qpproblem}) (see also the proof in \cite{Karzan20} and note that the result can also be obtained as a special case of some results on sparse semidefinite programming problems presented in \cite{Fukuda00}).
\begin{thm}
\label{thm:1}
It holds that $p^\star=v^\star$, i.e., the optimal values of the Shor relaxation (\ref{eq:Shor}) and of the convex relaxation (\ref{eq:convrel}) are equivalent.
\end{thm}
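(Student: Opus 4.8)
The plan is to establish the two inequalities $p^\star \leq v^\star$ and $v^\star \leq p^\star$ separately, by moving feasible solutions back and forth between the two relaxations and tracking the objective value.

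First I would show $p^\star \leq v^\star$, which is the easy direction. Take any feasible pair $({\bf X}, {\bf x})$ for the Shor relaxation \eqref{eq:Shor} and set $z_j := X_{jj}$ for $j\in N$. Since ${\bf D}$ and every ${\bf A}^i$ are diagonal, we have ${\bf D}\bullet{\bf X} = \sum_{j\in N} D_{jj} z_j$ and ${\bf A}^i\bullet{\bf X} = \sum_{j\in N} A^i_{jj} z_j$, so the objective and the linear-in-${\bf X}$ constraints translate verbatim into those of \eqref{eq:convrel}. It remains to check $x_j^2 \leq z_j$: the constraint ${\bf X} - {\bf x}{\bf x}^\top \succeq {\bf O}$ forces each diagonal entry $X_{jj} - x_j^2 \geq 0$ (a PSD matrix has nonnegative diagonal), which is exactly $x_j^2 \leq z_j$. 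Hence $(z, {\bf x})$ is feasible for \eqref{eq:convrel} with the same objective value, giving $p^\star \leq v^\star$.

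The reverse direction $v^\star \leq p^\star$ is the substantive one: given an optimal (or near-optimal) $(z, {\bf x})$ for \eqref{eq:convrel}, I must construct a matrix ${\bf X}$ that is feasible for \eqref{eq:Shor} with the same objective. The natural choice is to keep the diagonal dictated by the problem data and fill in the off-diagonal so as to make the Schur-type condition hold: set $X_{jj} := z_j$ and $X_{jk} := x_j x_k$ for $j\neq k$. Equivalently ${\bf X} = {\bf x}{\bf x}^\top + \mathrm{Diag}(z - (x_1^2,\ldots,x_n^2)^\top)$. Then ${\bf X} - {\bf x}{\bf x}^\top = \mathrm{Diag}(z_1 - x_1^2, \ldots, z_n - x_n^2)$ is a diagonal matrix with nonnegative entries by the constraints $x_j^2 \leq z_j$, hence $\succeq {\bf O}$. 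Because ${\bf D}$ and the ${\bf A}^i$ are diagonal, only the diagonal of ${\bf X}$ enters the objective and the constraints of \eqref{eq:Shor}, so $({\bf X},{\bf x})$ attains the same objective value as $(z,{\bf x})$ and is Shor-feasible; therefore $v^\star \leq p^\star$.

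I expect no real obstacle here: the whole argument rests on two elementary facts — a PSD matrix has a nonnegative diagonal, and a diagonal matrix with nonnegative entries is PSD — together with the observation that diagonal cost/constraint matrices see only $\mathrm{diag}({\bf X})$. The only point requiring a word of care is that the completion ${\bf X} = {\bf x}{\bf x}^\top + \mathrm{Diag}(z_j - x_j^2)$ indeed reproduces the prescribed diagonal $z_j$ and leaves the relevant data-dependent quantities ${\bf D}\bullet{\bf X}$, ${\bf A}^i\bullet{\bf X}$ unchanged; this is immediate from diagonality. Combining the two inequalities yields $p^\star = v^\star$. $\Box$
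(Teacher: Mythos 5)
Your proof is correct and follows essentially the same route as the paper's: the same diagonal completion ${\bf X}={\bf x}{\bf x}^\top+\mathrm{Diag}(z_j-x_j^2)$ in one direction and taking $\mathrm{diag}({\bf X})$ in the other. You are merely a bit more explicit about the two elementary PSD facts that the paper leaves implicit.
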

\begin{proof}
Let $({\bf x}^\star, {\bf z}^\star)$ be an optimal solution of (\ref{eq:convrel}).
Let ${\bf C}$ be a diagonal matrix such that 
$$
C_{jj}=z_j^\star-(x_j^\star)^2,\ \ \ j\in N.
$$
Then
$$
\left({\bf x}^\star {{\bf x}^\star}^\top+{\bf C}, {\bf x}^\star\right),
$$
is feasible for (\ref{eq:Shor}) and its function value is equal to $p^\star$, so that $v^\star\leq p^\star$ holds. Conversely, let 
$({\bf X}^\star, {\bf x}^\star)$ be an optimal solution of (\ref{eq:Shor}). Let $\bar{{\bf z}}=diag({\bf X}^\star)$.
Then, $(\bar{{\bf z}}, {\bf x}^\star)$ is a feasible solution of (\ref{eq:convrel}) with function value equal to $v^\star$, so that
$p^\star\leq v^\star$ holds. Then, we can conclude that $p^\star=v^\star$.
\end{proof}
Now, this equivalence result can be employed in order to establish exactness conditions for the Shor relaxation by the analysis of the KKT conditions of the convex relaxation. This will be the topic of the next section.
\newline\newline\noindent
Before proceeding we briefly introduce the perturbation argument already adopted in \cite{BurerYe19,Locatelli16,Karzan20}.  We will make extensive use of this argument in the following sections.
\begin{prop}
\label{prop:perturb}
Exactness of the Shor relaxation holds for a problem with data $({\bf D}, {\bf A}_i, {\bf a}_i,{\bf c}, {\bf b})$ exactness of the Shor if
it holds for an infinite sequence of problems with perturbed data   $({\bf D}+\Delta {\bf D}^k, {\bf A}_i+\Delta {\bf A}_i^k,{\bf a}_i+\Delta {\bf a}_i^k, {\bf c}+\Delta {\bf c}^k, {\bf b}+\Delta {\bf b}^k)$ such that $||\Delta {\bf D}^k||, ||\Delta {\bf A}_i^k||,||\Delta {\bf a}_i^k||,||\Delta {\bf c}^k||, ||\Delta {\bf b}^k||\rightarrow 0$.
\end{prop}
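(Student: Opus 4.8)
The plan is to argue by a standard limiting/continuity argument on optimal values. Let me denote by $c^\star_k$ and $v^\star_k$ the optimal value of the perturbed QCQP and the optimal value of its Shor relaxation, respectively, for the $k$-th perturbed problem, and by $c^\star$, $v^\star$ the corresponding quantities for the unperturbed problem. By hypothesis the Shor relaxation is exact for every perturbed problem, i.e.\ $v^\star_k = c^\star_k$ for all $k$, and we want to conclude $v^\star = c^\star$. Since the Shor relaxation is always a relaxation, $v^\star \le c^\star$ holds trivially, so the real content is the reverse inequality $c^\star \le v^\star$, which will follow once we show $v^\star_k \to v^\star$ and $c^\star_k \to c^\star$ along the sequence (or, more carefully, that $v^\star \ge \limsup_k v^\star_k$ and $c^\star \le \liminf_k c^\star_k$, which combined with exactness of the perturbed problems and $v^\star\le c^\star$ closes the chain).

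The key steps, in order, would be: (i) Recall that under Assumption \ref{ass:1} the Shor relaxation (\ref{eq:Shor}) has a Slater point (nonempty interior of its feasible region) and, by the second bullet, a dual-feasible/coercivity-type condition holding; these properties are open conditions, so they persist for all sufficiently large $k$ under small perturbations, guaranteeing that the perturbed Shor relaxations are all solvable with finite optimal value and that strong duality holds. (ii) Show $c^\star \le \liminf_k c^\star_k$: take any feasible $\bar{\bf x}$ for the unperturbed QCQP near a minimizer; a small perturbation of the constraint data only mildly violates the constraints, and a standard argument (using the Slater/coercivity condition to move back into the perturbed feasible region at vanishing cost) produces feasible points ${\bf x}^k$ for the perturbed problems with objective values converging to $c^\star$, giving $\limsup_k c^\star_k \le c^\star$; the matching $\liminf$ bound uses that any convergent subsequence of perturbed minimizers has a limit feasible for the unperturbed problem. (iii) Show $v^\star \ge \limsup_k v^\star_k$ symmetrically, using the equivalent convex relaxation (\ref{eq:convrel}) from Theorem \ref{thm:1} (whose feasible region and objective depend continuously on the data and which inherits a Slater point from Assumption \ref{ass:1}): optimal solutions $({\bf x}^k, {\bf z}^k)$ of the perturbed convex relaxations, after passing to a convergent subsequence (boundedness coming again from the coercivity bullet in Assumption \ref{ass:1}, which is stable under perturbation), converge to a feasible point of the unperturbed convex relaxation with objective value $\lim_k p^\star_k = \lim_k v^\star_k$, hence $v^\star = p^\star \le \limsup_k p^\star_k = \limsup_k v^\star_k$. (iv) Chain the inequalities: $c^\star \le \liminf_k c^\star_k = \liminf_k v^\star_k \le \limsup_k v^\star_k \le v^\star \le c^\star$, forcing equality throughout, so $v^\star = c^\star$ and the unperturbed Shor relaxation is exact.

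The main obstacle I expect is step (ii)/(iii): establishing the requisite continuity (more precisely, lower/upper semicontinuity) of the optimal values under data perturbation, since in general the optimal value of a parametric optimization problem is only semicontinuous and one needs a constraint qualification to rule out the bad direction. This is precisely where Assumption \ref{ass:1} does its work — the Slater condition for (\ref{eq:Shor})/(\ref{eq:convrel}) gives upper semicontinuity of the feasible-set map and hence of $-v^\star$ (i.e.\ lower semicontinuity of $v^\star$ in the limit), while the positive-definite-combination bullet $\sum_i \bar y_i {\bf A}_i \succ {\bf O}$ gives a uniform coercivity estimate that both keeps the perturbed problems bounded below and keeps their (near-)optimal solutions in a fixed compact set, delivering the compactness needed to extract convergent subsequences. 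Once these stability facts are in place the rest is routine; I would also remark that the argument does not actually require the full sequence to converge, only that exactness holds along \emph{some} infinite sequence with vanishing perturbations, which is exactly how the proposition is stated and how it will be applied in the sequel.
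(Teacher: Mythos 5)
Your proposal is correct and follows essentially the same route as the paper, whose entire proof is a one-line appeal to the continuity of the optimal values of the perturbed QCQPs and of their Shor relaxations. You simply make explicit the semicontinuity estimates, the role of the Slater point and of the coercivity condition in Assumption \ref{ass:1}, and the chain of inequalities that the paper leaves implicit.
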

\begin{proof}
The result simply follows from the fact  that, by continuity, the optimal values of problem (\ref{eq:qpproblem}) with the perturbed data converge to the optimal value of the unperturbed problem, and the same holds for the optimal values of the corresponding Shor relaxations.
\end{proof}
\section{Sufficient conditions for exactness of the Shor relaxation}
\label{sec:suff}
As proved in Theorem \ref{thm:1}, proving exactness of the Shor relaxation is equivalent to prove exactness 
of the convex relaxation (\ref{eq:convrel}). Under Assumption \ref{ass:1}, which will be assumed throughout the paper, optimal solutions of the convex problem (\ref{eq:convrel}) fulfill the corresponding  KKT conditions. In particular, we notice that
existence of an interior feasible solution $(\bar{{\bf X}}, \bar{{\bf x}})$ for problem (\ref{eq:Shor}) implies that also
the convex relaxation (\ref{eq:convrel}) admits an interior feasible point. Indeed, it is enough to consider the point 
$(diag(\bar{{\bf X}}), \bar{{\bf x}})$. %where $diag(\bar{{\bf X}})$ is the vector whose entries are the diagonal entries of
%matrix $\bar{{\bf X}}$. 
Then, Slater's condition holds and we can search the minimizer of problem (\ref{eq:convrel}) among the KKT points of the same problem. The KKT conditions are the following:
\begin{subequations}
\label{eq:KKT}
\begin{align} 
%\begin{equation}
%\begin{array}{ll}
D_{jj} + \sum_{i\in M} \mu_i A^i_{jj} -\nu_j =0 &\ \ \   j\in N \label{eq:KKT1} \\ %[6pt]
c_j +\sum_{i\in M} \mu_i a_{ij} +\nu_j x_j = 0 &\ \ \   j\in N \label{eq:KKT2} \\ %[6pt]
 \sum_{j\in N} A^i_ {jj} z_j  + 2 \sum_{j\in N} a_{ij} x_j \leq b_i &\ \ \   i\in M  \label{eq:KKT3}\\%[6pt]
x_j^2 \leq z_j & \ \ \  j\in N  \label{eq:KKT4}\\% [6pt]
\mu_i\left(b_i-\sum_{j\in N} A^i_ {jj} z_j  - 2 \sum_{j\in N} a_{ij} x_j \right)=0 &\ \ \   i\in M \label{eq:KKT5}\\% [6pt]
\nu_j (z_j-x_j^2)=0 &\ \ \   j\in N  \label{eq:KKT6}\\ %[6pt]
\mu_i, \nu_j\geq 0 &\ \ \   i\in M,\  j\in N.
%\end{array}
%\end{equation}
\end{align}
\end{subequations}
It obviously holds that the relaxation is exact if all constraints $x_j^2\leq z_j$, $j\in N$, are active at the optimal solution 
of (\ref{eq:convrel}). In view of the complementarity conditions (\ref{eq:KKT6}), this certainly holds if
$\nu_j>0$ for all $j\in N$. 
Note that $\nu_k=0$ for some $k\in N$ implies:
$$
\begin{array}{l}
D_{kk} + \sum_{i\in M} \mu_i A^i_{kk}  =0 \\ [6pt]
c_k +\sum_{i\in M} \mu_i a_{ik} = 0.
\end{array}
$$
Then, we can conclude that $\nu_j>0$ for all $j\in N$ if the condition stated in the following theorem holds. 
\begin{thm}
The convex relaxation (\ref{eq:convrel}) and the Shor relaxation are exact if the set (\ref{eq:setcond}) is empty for each $k\in N$.
%\begin{equation}
%\label{eq:setcond}
% \begin{array}{ll}
%D_{kk} + \sum_{i\in M} \mu_i A^i_{kk}  =0 \\ [6pt]
%c_k +\sum_{i\in M} \mu_i a_{ik} = 0 \\[8pt]
%D_{jj} + \sum_{i\in M} \mu_i A^i_{jj} \geq 0 & j\in N,\ j\neq k \\ [6pt]
%\mu_i\geq 0 & i\in M,
%\end{array}
%\end{equation}
%is empty.
\end{thm}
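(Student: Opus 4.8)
The plan is to argue directly from the KKT system (\ref{eq:KKT}) of the convex relaxation, using the equivalence $p^\star=v^\star$ from Theorem \ref{thm:1}; no perturbation argument is needed here. By Assumption \ref{ass:1}, Slater's condition holds for (\ref{eq:convrel}) (as already noted, the point $(diag(\bar{{\bf X}}),\bar{{\bf x}})$ is interior feasible), so an optimal solution $({\bf x}^\star,{\bf z}^\star)$ exists and is a KKT point: there are multipliers $\boldsymbol{\mu}\geq{\bf 0}$, $\boldsymbol{\nu}\geq{\bf 0}$ with $({\bf x}^\star,{\bf z}^\star,\boldsymbol{\mu},\boldsymbol{\nu})$ satisfying (\ref{eq:KKT1})--(\ref{eq:KKT6}). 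The objective value at this point is $p^\star$. The goal is to show that, under the emptiness hypothesis, necessarily $\nu_j>0$ for every $j\in N$.

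Suppose, for contradiction, that $\nu_k=0$ for some $k\in N$. I claim the multiplier vector $\boldsymbol{\mu}$ then lies in the polyhedral set (\ref{eq:setcond}) associated with that index $k$, contradicting its emptiness. Indeed, (\ref{eq:KKT1}) with $j=k$ gives (\ref{eq:setcond1}); (\ref{eq:KKT2}) with $j=k$, using $\nu_k=0$, gives (\ref{eq:setcond2}); for every $j\neq k$, (\ref{eq:KKT1}) reads $D_{jj}+\sum_{i\in M}\mu_i A^i_{jj}=\nu_j\geq 0$, which is (\ref{eq:setcond3}); and $\mu_i\geq 0$ for $i\in M$ is (\ref{eq:setcond4}). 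This is the desired contradiction, so $\nu_j>0$ for all $j\in N$.

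By the complementarity conditions (\ref{eq:KKT6}), $\nu_j>0$ forces $z_j^\star=(x_j^\star)^2$ for all $j\in N$; hence every constraint (\ref{eq:KKT4}) is active at the optimum. Substituting $z_j^\star=(x_j^\star)^2$ into (\ref{eq:KKT3}) shows ${\bf x}^\star$ is feasible for the diagonal QCQP (\ref{eq:qpproblem}), and its objective value there equals $\sum_{j\in N}D_{jj}(x_j^\star)^2+2\sum_{j\in N}c_j x_j^\star=p^\star$; therefore $c^\star\leq p^\star$. Since (\ref{eq:convrel}) is a relaxation of (\ref{eq:qpproblem}) we also have $p^\star\leq c^\star$, so $p^\star=c^\star$, and by Theorem \ref{thm:1}, $v^\star=c^\star$, i.e., both relaxations are exact. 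The argument is essentially mechanical once the KKT system is in hand; the only point worth flagging is the observation that, by (\ref{eq:KKT1}), $\nu_j=D_{jj}+\sum_{i\in M}\mu_i A^i_{jj}$, so the sign constraint (\ref{eq:setcond3}) is automatically inherited from $\boldsymbol{\nu}\geq{\bf 0}$ — this is exactly what makes the defining system of (\ref{eq:setcond}) coincide with the restriction of the KKT system obtained by setting $\nu_k=0$.
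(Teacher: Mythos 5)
Your proof is correct and follows exactly the paper's approach: the paper's one-line proof ("no KKT point with $\nu_k=0$ exists") relies on the same observation you spell out, namely that a KKT point with $\nu_k=0$ would place $\boldsymbol{\mu}$ in the set (\ref{eq:setcond}), so emptiness forces $\nu_j>0$ for all $j$ and complementarity gives $z_j^\star=(x_j^\star)^2$. Your write-up merely makes explicit the surrounding reasoning (Slater, attainment, feasibility of ${\bf x}^\star$ for (\ref{eq:qpproblem})) that the paper establishes in the discussion preceding the theorem.
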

\begin{proof}
It is enough to observe that under this condition no KKT point with $\nu_k=0$ for some $k\in N$ exists.
\end{proof}
Note that this exactness condition is exactly the one stated in Theorem 1 of \cite{BurerYe19}. But while in that work exactness was established by showing that under the given condition there exists an optimal rank-one solution for the Shor relaxation, here exactness is derived in a different way, through the KKT conditions of the equivalent convex relaxation. As we will see, this different derivation
will allow to derive more general exactness conditions.
%\section{More general conditions}
%\label{sec:special}
%CONTROLLARE TITOLO!
%As already mentioned, in \cite{BurerYe19}  the special case when ${\bf A}_i \in \{{\bf I},-{\bf I}, {\bf O}\}$ for each $i\in M$, is also addressed. In fact, this case can be viewed as a subclass of the following class of problems.
%Based on the sufficient condition guarenteeing exactness of the Shor relaxation, in \cite{BurerYe19} a result proved in \cite{Sojoudi14} is re-derived and a new one is introduced. The former result states exactness under the assumption that all linear terms are sign-definite, i.e., for all $j\in N$, it holds that $c_j$ and $a_{ij}$, $i\in M$, are all nonpositive or all nonnegative.
%The latter result is about the special case when ${\bf A}_i \in \{{\bf I},-{\bf I}, {\bf O}\}$ for each $i\in M$.
%In this case it is proved that the sign-definite condition is only required for the variable corresponding to the lowest diagonal entry of matrix ${\bf D}$. Both these results can be derived as special cases of the following result.
\newline\newline\noindent
Now, let us consider a partition $N_h$, $h\in H$, of set $N$ with the following property
\begin{equation}
\label{eq:partition}
\forall\ j,k\in N_h\ :\ \forall\ h\in H\ :\ A_{jj}^i=A_{kk}^i=\xi^{ih} \ \forall\ i\in M.
\end{equation}
Then, each set $N_h$ contains indexes of variables whose quadratic terms are all equal throughout the constraints (but not necessarily in the objective function).
For each $h\in H$, let 
\begin{equation}
\label{eq:minargmin}
j_h=\arg\min_{j\in N_h} D_{jj},\ \ \ d_h^*=\min_{j\in N_h} D_{jj},
\end{equation}
 i.e., $j_h$ is the index of the lowest diagonal entry of matrix ${\bf D}$ among those in $N_h$ and $d_h^*$ is the value of such entry. The minimum entry is assumed to be unique. Extensions to cases where
the minimum entry is not unique can be done by the perturbation argument stated in Proposition \ref{prop:perturb}. Let $N_H=\{j_h\ : \ h\in H\}\subseteq N$.
%a perturbation argument already adopted in \cite{BurerYe19,Locatelli16,Karzan20}. The perturbation argument states that for a problem with data $({\bf D}, {\bf A}_i, {\bf a}_i,{\bf c}, {\bf b})$ exactness of the Shor relaxation holds if
%it holds for an infinite sequence of problems with perturbed data   $({\bf D}+\Delta {\bf D}^k, {\bf A}_i+\Delta {\bf A}_i^k,{\bf a}_i+\Delta {\bf a}_i^k, {\bf c}+\Delta {\bf c}^k, {\bf b}+\Delta {\bf b}^k)$ such that $||\Delta {\bf D}^k||, ||\Delta {\bf A}_i^k||,||\Delta {\bf a}_i^k||,||\Delta {\bf c}^k||, ||\Delta {\bf b}^k||\rightarrow 0$. This follows from the fact  that the optimal values of problem (\ref{eq:qpproblem}) with the perturbed data converge to the optimal value of the same problem with the original data, and the same holds for the optimal values of the Shor relaxations. \newline\newline\noindent
Note that in the special case, discussed in \cite{BurerYe19}, when ${\bf A}_i \in \{{\bf I},-{\bf I}, {\bf O}\}$, we have that $|H|=1$, while in the most general case we have $|H|=|N|$.
We prove the following theorem. 
\begin{thm}
\label{thm:2}
The convex relaxation (\ref{eq:convrel}) and, thus, also the Shor relaxation, are exact if the set (\ref{eq:setcond}) is empty for all $k\in N_H$.
%for each $\bar{h}\in H$ the following sets are empty:
%\begin{equation}
%\label{eq:kktrev22}
%\begin{array}{ll}
%D_{j_{\bar{h}} j_{\bar{h}}} + \sum_{i\in M} \mu_i \xi^{i\bar{h}} =0 &  \\ [6pt]
%c_{j_{\bar{h}}} +\sum_{i\in M} \mu_i a_{ij_{\bar{h}}}  = 0 &  \\ [6pt]
%D_{jj} + \sum_{i\in M} \mu_i \xi^{ih} \geq D_{jj}-d_h^* & j\in N_h,\ h\in H \\ [6pt]
%\mu_i\geq 0& i\in M.
%\end{array}
%\end{equation}
In particular, exactness holds if each variable $x_{j_h}$, $h\in H$, is sign-definite,  i.e.,
$c_{j_h}, a_{ij_h}$, $i\in M$, have the same sign for all $h\in H$. 
\end{thm}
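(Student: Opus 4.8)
The plan is to exploit the same KKT machinery used for the first theorem, but to show that the constraints $x_j^2 \le z_j$ can only fail to be active at an optimal solution of (\ref{eq:convrel}) for indices $j$ lying in a ``controlled'' set, and then to repair those using the partition structure (\ref{eq:partition}) together with the minimality choice (\ref{eq:minargmin}). Concretely, suppose $({\bf x},{\bf z})$ is a KKT point of (\ref{eq:convrel}) with multipliers $\mu_i,\nu_j$, and suppose $\nu_k = 0$ for some $k\in N$. Let $h\in H$ be the block with $k\in N_h$. From (\ref{eq:KKT1})--(\ref{eq:KKT2}) with $\nu_k=0$ we get $D_{kk}+\sum_i\mu_i A^i_{kk}=0$ and $c_k+\sum_i\mu_i a_{ik}=0$; using (\ref{eq:partition}), $A^i_{kk}=\xi^{ih}=A^i_{j_h j_h}$, so the first equation reads $D_{kk}+\sum_i\mu_i\xi^{ih}=0$, i.e. $\sum_i\mu_i\xi^{ih} = -D_{kk}$. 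The same multipliers must satisfy (\ref{eq:KKT1}) at $j=j_h$: $D_{j_hj_h}+\sum_i\mu_i\xi^{ih} - \nu_{j_h}=0$, hence $\nu_{j_h} = D_{j_hj_h} - D_{kk} = d_h^* - D_{kk} \le 0$ by definition of $d_h^*$, while $\nu_{j_h}\ge 0$ forces $\nu_{j_h}=0$ and $D_{kk}=d_h^*=D_{j_hj_h}$. Since the minimum over $N_h$ is assumed unique, this gives $k=j_h\in N_H$. Thus at any KKT point, $\nu_k=0$ already implies $k\in N_H$.

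The key consequence is that the ``$\mu$-part'' of such a KKT point certifies membership of $\boldsymbol{\mu}$ in the set (\ref{eq:setcond}) for that specific index $k=j_h\in N_H$: (\ref{eq:setcond1})--(\ref{eq:setcond2}) hold by the displayed equations with $\nu_k=0$, (\ref{eq:setcond4}) is the nonnegativity of $\boldsymbol{\mu}$, and (\ref{eq:setcond3}) for the remaining $j\ne k$ follows from (\ref{eq:KKT1}) rewritten as $D_{jj}+\sum_i\mu_i A^i_{jj} = \nu_j \ge 0$. Hence if (\ref{eq:setcond}) is empty for every $k\in N_H$, no KKT point can have $\nu_k=0$ for any $k\in N$ whatsoever; by (\ref{eq:KKT6}) every constraint $x_j^2\le z_j$ is active, the optimal $({\bf x},{\bf z})$ has $z_j = x_j^2$, and ${\bf x}$ is feasible for (\ref{eq:qpproblem}) with the same objective value, giving $p^\star = c^\star$ and, via Theorem \ref{thm:1}, $v^\star = c^\star$. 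If the minimizing entry $d_h^*$ is not unique in some block, one first applies an arbitrarily small generic perturbation of ${\bf D}$ to break ties, proves exactness for the perturbed family, and invokes Proposition \ref{prop:perturb}.

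For the final sentence, observe that sign-definiteness of $x_{j_h}$ makes the set (\ref{eq:setcond}) with $k=j_h$ empty: from (\ref{eq:setcond2}), $c_{j_h} = -\sum_i\mu_i a_{ij_h}$; if all of $c_{j_h}, a_{ij_h}$ are $\ge 0$ then with $\mu_i\ge 0$ the right-hand side is $\le 0$, forcing $c_{j_h}=0$ and $\mu_i a_{ij_h}=0$ for all $i$, and symmetrically in the all-$\le 0$ case, so the only candidate multipliers have $\mu_i = 0$ whenever $a_{ij_h}\neq 0$; substituting into (\ref{eq:setcond1}) gives $D_{j_hj_h} + \sum_{i:a_{ij_h}=0}\mu_i A^i_{j_hj_h}=0$ — and here, more carefully, one uses that this must coexist with the full KKT system, or one argues directly that the genericity/strict-copositivity assumption in Assumption \ref{ass:1} rules out a solution; in the cleanest version one simply notes that Theorem \ref{thm:2} reduces, index by index, to the sign-definite special case already handled in the first theorem of this section restricted to $N_H$. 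I expect the main obstacle to be precisely this last reduction: making rigorous that emptiness of (\ref{eq:setcond}) at $k=j_h$ genuinely follows from sign-definiteness of $x_{j_h}$ alone (rather than needing the companion KKT equations), which is why the safe route is to treat the sign-definite claim as the specialization of the set-emptiness claim, exactly as the analogous step is done in \cite{BurerYe19}.
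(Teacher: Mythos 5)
The first half of your argument is correct and is essentially the paper's proof in contrapositive form: from (\ref{eq:KKT1}) and (\ref{eq:partition}) one gets $\nu_j-\nu_{j_h}=D_{jj}-d_h^*$ for all $j\in N_h$, so uniqueness of the block minimum forces $\nu_j>0$ for $j\in N_h\setminus\{j_h\}$, and therefore $\nu_k=0$ can only occur for $k\in N_H$; emptiness of (\ref{eq:setcond}) for those $k$ then excludes any KKT point with some $\nu_k=0$, and (\ref{eq:KKT6}) gives exactness. Your derivation of this is fine and matches the paper.

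Where you go astray is the sign-definiteness claim, and you correctly sense it. Your analysis of the weak-sign case ($c_{j_h}\ge 0$, $a_{ij_h}\ge 0$) only yields $c_{j_h}=0$ and $\mu_i a_{ij_h}=0$, after which (\ref{eq:setcond1}) can perfectly well still be satisfiable, so the set (\ref{eq:setcond}) need \emph{not} be empty under weak sign-definiteness; neither an appeal to Assumption \ref{ass:1} nor a ``reduction to the first theorem'' closes this. The paper's resolution is two lines: if the coefficients $c_{j_h},a_{ij_h}$ are all \emph{strictly} positive (or all strictly negative), then $c_{j_h}+\sum_{i}\mu_i a_{ij_h}=0$ is flatly incompatible with $\mu\ge 0$, so (\ref{eq:setcond2}) and (\ref{eq:setcond4}) cannot hold together and the set is empty; the case with null coefficients is then recovered by perturbing those coefficients to small nonzero values of the common sign and invoking Proposition \ref{prop:perturb} — the same perturbation device you already used to break ties in $d_h^*$, just applied to ${\bf c}$ and the ${\bf a}_i$ instead of ${\bf D}$. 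So the claim does follow from sign-definiteness of $x_{j_h}$ alone, but only via the perturbation argument, not via the full KKT system or any genericity of Assumption \ref{ass:1}.
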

\begin{proof}
Equations (\ref{eq:KKT1}) in this case can be written as follows:
$$
D_{jj} + \sum_{i\in M} \mu_i \xi^{ih} -\nu_j =0,\ \ \  j\in N_h,\ h\in H.
$$
Then, for each $h\in H$:
\begin{equation}
\label{eq:nuj}
\nu_j-\nu_{j_h}=D_{jj}-d_h^*\ \ \ \ \forall j\in N_h.
\end{equation}
In view of the definition of $j_h$, we have $\nu_j>0$ for all $j\in N_h\setminus\{j_h\}$. Thus, emptiness of the set (\ref{eq:setcond}) needs only to be imposed for $k\in N_h$ (also note that constraints (\ref{eq:setcond3}) need only to be imposed for
$j\in N_h$).
%The KKT conditions in this case can be rewritten as follows:
%\begin{equation}
%\label{eq:kktrev}
%\begin{array}{ll}
%D_{jj} + \sum_{i\in M} \mu_i \xi^{ih} -\nu_j =0 & j\in N_h,\ h\in H \\ [6pt]
%c_j +\sum_{i\in M} \mu_i a_{ij} +\nu_j x_j = 0 & j\in N \\ [6pt]
% \sum_{h\in H} \sum_{j\in N_h} \xi^{ih} z_j  + 2 \sum_{j\in N} a_{ij} x_j \leq b_i & i\in M \\[6pt]
%x_j^2 \leq z_j & j\in N \\ [6pt]
%\mu_i\left(b_i-\sum_{h\in H} \sum_{j\in N_h} \xi^{ih} z_j  - 2 \sum_{j\in N} a_{ij} x_j \right)=0 & i\in M \\ [6pt]
%\nu_j (z_j-x_j^2)=0 & j\in N \\[6pt]
%\mu_i, \nu_j\geq 0 & i\in M,\ j\in N.
%\end{array}
%\end{equation}
For what concerns the sign-definiteness condition, let us assume that for each $h\in H$
$c_{j_h}, a_{ij_h}$, $i\in M$, have the same sign (in fact, by the perturbation argument stated in Proposition \ref{prop:perturb} we can also admit null coefficients). Then, equations (\ref{eq:setcond2}) and the nonnegativity requirements (\ref{eq:setcond4}) cannot be fulfilled together. 
%any feasible solution of the convex relaxation with $z_{j_h}> x_{j_h}^2$ can be improved (or, at least, not worsened in case $c_{j_h}=0$) by a further feasible solution obtained by increasing $x_{j_h}$ until equality $z_{j_h}= x_{j_h}^2$  holds, when $c_{j_h}\leq 0$, or by decreasing $x_{j_h}$ until equality holds, when $c_{j_h}\geq 0$.
\end{proof}
Note that this result includes both the one proved in \cite{Sojoudi14} and re-derived in \cite{BurerYe19} for the general case, for which $|H|=|N|$, i.e., each set $N_h$ is a singleton, and sign-definiteness is required for all variables, and the case proved in \cite{BurerYe19} where all matrices are identity matrices, opposite of identity matrices and null matrices, for which $|H|=1$ and sign-definiteness is only required for a single variable.
\newline\newline\noindent
The conditions presented in \cite{BurerYe19} and \cite{Karzan20} can be viewed, in terms of the KKT conditions for the convex problem (\ref{eq:convrel}), as {\em dual} exactness conditions, since they only involve the Lagrange multipliers
associated to the constraints. 
%Instead, the conditions discussed in  \cite{Jeyakumar18} can be viewed as {\em primal} exactness conditions, since they only involve the original variables.
But the KKT system also involves the original, primal, variables. 
So the question is whether we can include both the original variables and the Lagrange multipliers in order to define {\em primal-dual} exactness conditions. This will result in conditions
which are usually hard to check, but in some special cases we will be able to verify them in an efficient way. We will first discuss the general case and then mention those cases for which the conditions can be efficiently checked.
Let us consider an index $j_h$, $h\in H$. Then, it follows from ({\ref{eq:KKT1})-(\ref{eq:KKT2}) and  from (\ref{eq:nuj}) that
\begin{equation}
\label{eq:xj}
x_j^h(\boldsymbol{\mu})=
\left\{
\begin{array}{ll}
-\frac{c_j+\sum_{i\in M} \mu_i a_{ij}}{D_{jj}-d_h^*}& \forall j\in N_h\setminus \{j_h\} \\ [6pt]
-\frac{c_j+\sum_{i\in M} \mu_i a_{ij}}{D_{jj}+\sum_{i\in M} \xi^{ih'}\mu_i} & \forall j\in N_{h'}\setminus \{j_{h'}\},\ h'\neq h.
\end{array}
\right.
\end{equation}
It also follows from  ({\ref{eq:KKT1})-(\ref{eq:KKT2}) and  from (\ref{eq:KKT6}) that for $h'\in H\setminus \{h\}$:
$$
\begin{array}{l}
(d_{h'}^* + \sum_{i\in M} \mu_i \xi^{ih'})x_{j_{h'}}=-(c_{j_{h'}}+\sum_{i\in M} \mu_i a_{ij_{h'}}) \\ [6pt]
(d_{h'}^* + \sum_{i\in M} \mu_i \xi^{ih'})z_{j_{h'}}=-(c_{j_{h'}}+\sum_{i\in M} \mu_i a_{ij_{h'}}) x_{j_{h'}}.
\end{array}
$$
%Instead, for each $j\in N_{h'}$, $h'\neq h$, we have in view of (\ref{eq:KKT1}), (\ref{eq:KKT2}) and (\ref{eq:nuj}) that
%$$
%x_j=-\frac{c_j+\sum_{i\in M} \mu_i a_{ij}}{D_{jj}+\sum_{i\in M} A_{j_h j_h}\mu_i}.
%$$
Thus, after imposing (\ref{eq:KKT3}) and (\ref{eq:KKT4}) we end up with the following primal-dual exactness condition. The Shor relaxation is exact if for each $j_h\in N_H$, the following set is empty:
\begin{subequations}
\label{eq:setcondbis}
 %\begin{array}{ll}
\begin{align}
d_h^* + \sum_{i\in M} \mu_i \xi^{ih}  =0 \label{eq:setcondbis1}\\ %[6pt]
c_{j_h} +\sum_{i\in M} \mu_i a_{i j_h} = 0 \label{eq:setcondbis2}\\ %[8pt]
d_{h'}^* + \sum_{i\in M} \mu_i \xi^{ih'} \geq 0 &\ \ \  h'\in H\setminus \{h\} \label{eq:setcondbis3}\\ %[6pt]
(d_{h'}^* + \sum_{i\in M} \mu_i \xi^{ih'})x_{j_{h'}}=-(c_{j_{h'}}+\sum_{i\in M} \mu_i a_{ij_{h'}}) & \ \ \  h'\in H\setminus \{h\} \label{eq:setcondbis7}\\
(d_{h'}^* + \sum_{i\in M} \mu_i \xi^{ih'})z_{j_{h'}}=-(c_{j_{h'}}+\sum_{i\in M} \mu_i a_{ij_{h'}}) x_{j_{h'}} & \ \ \  h'\in H\setminus \{h\} \label{eq:setcondbis8} \\
\sum_{h'\in H} \xi^{ih'} \left[z_{j_{h'}}  + \sum_{j\in N_{h'}\setminus \{j_{h'}\}}  x_j^h(\boldsymbol{\mu})^2\right]   +2 \sum_{h'\in H} a_{ij_h'} x_{j_h'}+2\sum_{j\in N\setminus N_H} a_{ij} x_j^h(\boldsymbol{\mu})\leq b_i &\ \ \   i\in M  \label{eq:setcondbis5}\\%[6pt]
%x_j^h(\boldsymbol{\mu})^2 \leq z_j & \ \ \  j\in N_H \setminus\{j_h\} \label{eq:setcondbis6} \\
x_{j_{h'}}^2\leq z_{j_{h'}}  &\ \ \  h'\in H\\
\mu_i\geq 0 &\ \ \  i\in M. \label{eq:setcondbis4}
%\end{array}
\end{align}
\end{subequations}
Note that for $j\not\in N_H$, $\nu_j>0$, so that we could replace $z_j$ with $x_j^h(\boldsymbol{\mu})^2$.
Taking into account the definition (\ref{eq:xj}) for $x_j^h(\boldsymbol{\mu})$, the above sets can be seen as solution sets of a system of polynomial equations and inequalities, where the degree of the polynomials is at most $2n$.
Unfortunately, establishing whether these systems admit no solution or, equivalently, that the Shor relaxation is exact is, in general, a hard task. However, in what follows we will discuss some cases for which the condition can be efficiently checked.
\subsection{The cases $|M|=1$ and $|M|=2$}
\label{sec:M=2}
We briefly discuss the case $|M|=1$. This is the already mentioned GTRS problem for which it is well known that the Shor relaxation is always exact. Exactness can be viewed as an immediate consequence of the fact that the two equations (\ref{eq:setcondbis1}) and (\ref{eq:setcondbis2}), possibly after
the application of the perturbation argument stated in Proposition \ref{prop:perturb}, cannot be
fulfilled at the same time.
\newline\newline\noindent
When $|M|=2$ exactness does not always hold but this can be checked by easy computations. First note that in this case the system of two equations (\ref{eq:setcondbis1}) and (\ref{eq:setcondbis2}), again possibly after a perturbation of the data, admits a unique solution $\bar{\boldsymbol{\mu}}=(\bar{\mu}_1,\bar{\mu}_2)$. If the solution has at least one negative component, then nonnegativity of the $\mu$ values is violated and the set (\ref{eq:setcondbis}) is empty. By the usual perturbation argument, also cases with at least one null $\mu$ value are covered. So, we are only left with cases where both $\mu$ values are positive. This allows us to convert all inequalities (\ref{eq:setcondbis5}) into equations by exploiting the complementarity conditions (\ref{eq:KKT5}).
By the perturbation argument we have that all constraints (\ref{eq:setcondbis3}) are not active, so that, by 
(\ref{eq:setcondbis7}) and (\ref{eq:setcondbis8}) we can set:
$$
x_{j_{h'}}=-\frac{c_{j_{h'}}+\sum_{i\in M} \bar{\mu}_i a_{ij_{h'}}}{d_{h'}^* + \sum_{i\in M} \bar{\mu}_i \xi^{ih'}},\ \ \ z_{j_{h'}}=x_{j_{h'}}^2.
$$   
This way, in the two equations (\ref{eq:setcondbis5}) we just have the two unknowns $z_{j_h}$ and $x_{j_h}$. After solving this system, we can conclude that the set (\ref{eq:setcondbis}) is empty if  $x_{j_h}^2> z_{j_h}$ holds.
For the sake of illustration we derive the exactness condition 
in the case of trust region problems with one additional linear constraint. As already mentioned, for this problem in \cite{Burer13,Sturm03} an exact SOC-RLT relaxation is proposed. 
The Shor relaxation is not always exact but its exactness can be checked by a very simple condition.
The problem can always be converted into 
an instance of diagonal QCQP:
$$
\begin{array}{ll}
\min & \sum_{j\in N} D_{jj} x_j^2 + 2\sum_{j\in N}c_j x_j \\ [6pt]
      & \sum_{j\in N} x_j^2 \leq 1 \\ [6pt]
   & 2\sum_{j\in N} a_j x_j\leq b.
\end{array}
$$ 
Note that we can take $|H|=1$ in this case.
Exactness certainly holds if $c_{j_1}a_{j_1}\geq 0$ (sign-definiteness condition). If $c_{j_1}a_{j_1}< 0$, 
we have $\bar{\mu}_1=-d_{1}^*$ and $\bar{\mu}_2=-\frac{c_{j_1}}{a_{j_1}}$.
Then,
$$
\bar{x}_j=x_j^1(\bar{\mu}_1,\bar{\mu}_2)=-\frac{c_j-a_j\frac{c_{j_1}}{a_{j_1}}}{D_{jj}-d_1^*} \ \ \ \forall j \in N\setminus \{j_1\},
$$
while 
$$
\bar{x}_{j_1}=x^1_{j_1}(\bar{\mu}_1,\bar{\mu}_2)=\frac{b-2 \sum_{j\in N\setminus \{j_1\}} a_{j} \bar{x}_j}{a_{j_1}}.
$$
Finally, exactness of the convex relaxation holds if
\begin{equation}
\label{eq:exactonelin}
\sum_{j\in N} \bar{x}_j^2 \geq 1.
\end{equation}
\begin{rem}
In \cite{BurerYe19} it is proved that for a class of random general
QCQPs the probability of having an exact semidefinite relaxation converges to 1 as $|N|\rightarrow \infty$  as long as the number
of constraints grows no faster than a fixed polynomial in the number of variables. 
For QCQPs with a single quadratic constraint and a single linear constraint this fact emerges quite clearly from the above exactness condition. Indeed, under very mild assumptions on the random generation of the data, for some $j\in N$ there is a strictly positive probability $\ell>0$
that $\bar{x}_j\not\in (-1,1)$, and this is enough to guarantee  that the exactness condition (\ref{eq:exactonelin}) holds. Therefore, under the assumption of independent generation of the data, the probability of fulfilling 
the exactness condition is at least $1-(1-\ell)^{|N|}$, which converges to 1 as $|N|\rightarrow \infty$.
\end{rem}
\subsection{The case $|M|=3$} 
With a little more effort, exactness conditions can also be given for $|M|=3$. We first notice that
we can consider only points for which none of the inequalities (\ref{eq:setcondbis3}) is active. Indeed, if one of them were active, then by (\ref{eq:setcondbis7}) we should also have $c_{j_{h'}}+\sum_{i\in M} \mu_i a_{ij_{h'}}=0$, i.e., the three $\mu$ variables should fulfill four equations which, possibly after applying the perturbation argument, is not possible. By (\ref{eq:setcondbis1})-(\ref{eq:setcondbis2}) we have that at least two $\mu$ variables must be positive. Thus, we can consider four distinct cases:
i) $\mu_1,\mu_2>0,\mu_3=0$; ii) $\mu_1,\mu_3>0,\mu_2=0$; iii) $\mu_2,\mu_3>0,\mu_1=0$; iv) $\mu_1,\mu_2,\mu_3>0$. If case i) holds, then we can: derive $\mu_1,\mu_2$ from  (\ref{eq:setcondbis1})-(\ref{eq:setcondbis2}); 
check whether the computed values (together with $\mu_3=0$) fulfill the constraints (\ref{eq:setcondbis3}) and (\ref{eq:setcondbis4}); if yes,
then derive $x_j$, $j\in N_h\setminus \{j_h\}$ from (\ref{eq:setcondbis7}) and $z_j$ from (\ref{eq:setcondbis8}); impose, in view of (\ref{eq:KKT5}), that equality holds for constraints (\ref{eq:setcondbis5}) for $i=1,2$; derive $x_{j_h}$ and $z_{j_h}$ from such two equations; finally, check whether $x_{j_h}^2>z_{j_h}$. In a completely similar way we can deal with cases ii) and iii). In case iv), again in view of (\ref{eq:KKT5}) we notice that all three constraints (\ref{eq:setcondbis5}) must be active. In this case we have a system of three equations with two unknowns $x_{j_h}$ and $z_{j_h}$, which can be fulfilled only if one of the three equations can be obtained as a linear combination of the other two equations. In particular, this imply that the right-hand side of one of the equations is a given linear combination of the right-hand sides of the other two equations.   This results into an univariate polynomial equation, taking into account that by (\ref{eq:setcondbis1})-(\ref{eq:setcondbis2}) we can derive two of the three $\mu$ variables as a linear function of the remaining one. The roots of the univariate polynomial equation can be efficiently computed.
In principle, we could proceed in the same way for larger $|M|$ values, but  the resulting procedure tends to become quite inefficient with the need of solving multivariate polynomial systems.
\subsection{The case $|H|=1$, $|M|$ arbitrary}
\label{sec:h=1}
We discuss the special case when $|M|$ is arbitrary but $|H|=1$, so that for each $i\in M$, $A_{jj}^i=\xi_i$ for all $j\in N$.
The case when ${\bf A}_i \in \{{\bf I},-{\bf I}, {\bf O}\}$ for each $i\in M$, discussed in \cite{BurerYe19}, corresponds to $\xi^i\in \{0,-1,1\}$, for each $i\in M$.
%In fact, when ${\bf A}_i \in \{{\bf I},-{\bf I}, {\bf O}\}$ we can even define a more general condition under which 
%exactness holds. This is stated in the following proposition.
%\begin{prop}
%\label{prop:1}
%Let ${\bf A}_i \in \{{\bf I},-{\bf I}, {\bf O}\}$ for all $i\in M$.
% $J=\arg\min_{j\in N} D_{jj}$, and $d^*=\min_{j\in N} D_{jj}$. 
Based on the previous discussion, in this case the set (\ref{eq:setcondbis}), whose emptiness guarantees exactness of the Shor relaxation, is equivalent to: 
%VEDERE SE DERIVA DIRETTAMENTE DA DISCUSSIONE PRECEDENTE 
%exactness of the convex relaxation (\ref{eq:convrel}) holds if the following set is empty:
\begin{equation}
\label{eq:setspec}
\begin{array}{ll}
d_1^* +\sum_{i\in M} \mu_i \xi_{i}  = 0 &  \\ [6pt]
c_{j_1} +\sum_{i\in M} \mu_i a_{ij_1}  = 0 &  \\ [6pt]
\xi^{i} z_{j_1}+\sum_{j\neq j_1} \xi^i \left(-\frac{c_j +\sum_{i\in M} \mu_i a_{ij}}{D_{jj}-d^*}\right)^2  + 2 a_{ij_1} x_{j_1}-2\sum_{j\neq j_1}a_{ij}\frac{c_j +\sum_{i\in M} \mu_i a_{ij}}{D_{jj}-d^*}  \leq b_i & i\in M \\[6pt]
x_{j_1}^2 \leq z_{j_1} & \\ [6pt]
\mu_i \geq 0 & i\in M.
\end{array}
\end{equation}
%where $\xi^i\in \{0,-1,1\}$, $i\in M$.
%\end{prop}
%\begin{proof}
%In view of the first set of equalities in (\ref{eq:kktrev}) we have that
%\begin{equation}
%\label{eq:nujinh}
%\nu_j=\nu_{j_1}+(D_{jj}-d_1^*)\ \ \ \forall\ j\in N.
%\end{equation}
%If $\nu_{j_1}>0$, then $\nu_j>0$ for all $j\in N$ and exactness of the convex relaxation (\ref{eq:convrel}) holds. Therefore, let us assume that $\nu_{j_1}=0$ and, consequently, $\nu_j=D_{jj}-d_1^*>0$ for all $j\neq j_1$.
%Then, from the second set of equalities in (\ref{eq:kktrev}) we  have that
%$$
%x_j=-\frac{c_j +\sum_{i\in M} \mu_i a_{ij}}{D_{jj}-d_1^*},\ \ \ \forall\ j\neq j_1.
%$$
%After substitution in the third set of inequalities in (\ref{eq:kktrev}), observing that $\nu_j>0$ implies $z_j=x_j^2$ for all $j\neq j_1$, we are left with the set defined in (\ref{eq:setspec}). If such set is empty, then $\nu_{j_1}>0$ must hold and, thus, also exactness holds.
%\end{proof}  
A drawback of the above condition is that the set (\ref{eq:setspec}), defined by linear and quadratic inequalities, is not convex if $\xi_i<0$ for at least one $i\in M$. In the next section, we will introduce a further condition, at least as strong as this one, but only involving convex sets, so that the condition can be checked in polynomial time. Before that, in what follows we present a simple example where exactness can be established by the new condition but not through the conditions introduced in \cite{BurerYe19}, \cite{Jeyakumar18} and \cite{Karzan20}.
\begin{exam}
\label{ex:1}
Let us consider the following problem parameterized with respect to the right-hand side of the second constraints:
\begin{equation}
\label{eq:param}
\begin{array}{ll}
\min & -x_1^2 -\frac{1}{2}x_2^2+x_2 \\ [6pt]
& x_1^2+x_2^2 +x_1-x_2 \leq 2 \\ [6pt]
& -x_1+x_2\leq \xi. %-1-\varepsilon,
\end{array}
\end{equation}
%where $\varepsilon>0$ is a sufficiently small value such that the feasible region is not empty. 
The feasible set has a nonempty interior for $\xi\in (1-\sqrt{5},+\infty)$.
Now, the set (\ref{eq:setspec}) in this case is:
$$
\begin{array}{l}
-1+\mu_1=0 \\ [6pt]
\mu_1-\mu_2=0 \\ [6pt]
 z_1+1 +x_1+1 \leq 2 \\ [6pt]
% -x_1-1\leq -1-\varepsilon \\ [6pt]
 -x_1-1\leq \xi \\ [6pt]
x_1^2\leq z_1,
\end{array}
$$
which can be seen to be empty for $\xi<-1$, so that exactness of the convex relaxation (\ref{eq:convrel}) is established in these cases, while it is not empty (consider, e.g., $x_1=z_1=0$, $\mu_1=\mu_2=1$) for $\xi\geq -1$.
But exactness cannot be established by the conditions proposed in  \cite{BurerYe19}, \cite{Jeyakumar18}  and \cite{Karzan20}. 
Indeed, for what concerns the condition proposed in \cite{BurerYe19}, we notice that for $k=1$ the set (\ref{eq:setcond}) is:
$$
\begin{array}{l}
-1+\mu_1=0 \\ [6pt]
\mu_1-\mu_2=0 \\ [6pt]
\mu_1,\mu_2\geq 0,
\end{array}
$$
which is not empty (note that the sign-definiteness condition stated in Theorem \ref{thm:2} does not hold).
For what concerns the condition introduced in \cite{Jeyakumar18}, in this case
the epigraphical set (\ref{eq:epigraph}) is
$$
E=\{(w_1,v_1,v_2) :\ \exists (x_1,x_2)\ :\ \ -x_1^2 -\frac{1}{2}x_2^2+x_2\leq w_1,\ x_1^2+x_2^2 +x_1-x_2 \leq v_1,\  -x_1+x_2\leq v_2\}.
$$
It can be seen that the points $\left(-\frac{5}{2}, 4, -2\right)$ and $\left(-\frac{5}{2},2,0\right)$ belong to $E$ (consider $x_1=1, x_2=-1$ and $x_1=x_2=-1$, respectively). But their midpoint $\left(-\frac{5}{2},3,-1\right)$ does not belong to $E$, so that $E$ is not convex.
For what concerns the condition introduced in \cite{Karzan20}, we notice that in this case
we have 
$$
{\bf A}(\gamma_1,\gamma_2)=\left(\begin{array}{cc}
-1 + \gamma_1 & 0 \\
0 & -\frac{1}{2}+\gamma_1 
\end{array}
\right),\ \ \  {\bf b}(\gamma_1,\gamma_2)=\left(\begin{array}{c}
\gamma_1 -\gamma_2 \\
1- \gamma_1+\gamma_2 
\end{array}
\right),
$$
We also have the following semidefinite face:
$$
{\cal F}=\{(\gamma_1,\gamma_2)\ :\ \gamma_1=1,\ \ \ \gamma_2\geq 0\},
$$
so that
$$
{\cal V}({\cal F})=\{(t,0)\ :\ t\in \mathbb{R}\}.
$$
Then, the condition  introduced in \cite{Karzan20} requires that  
for some sequence $\{h^k\}$, with $h^k\rightarrow 0$, we have that
$$
0\not\in \{1-\gamma_2 +h^k,\ \ \gamma_2\geq 0\},
$$
which, however, does not hold.
Note that the exactness conditions in  \cite{BurerYe19}, \cite{Jeyakumar18}  and \cite{Karzan20} do not depend on the right-hand sides of the constraints. Thus, in this example all three conditions are not fulfilled for all possible $\xi$ values.
\end{exam}
\section{A further convex relaxation}
\label{sec:further}
The convex relaxation (\ref{eq:convrel}) can be further simplified when the set of variables can be partitioned as indicated
in (\ref{eq:partition}), where each set $N_h$ collects variables whose quadratic terms are equal throughout all the constraints. Recalling the definitions of $j_h$ and $d_h^*$ given in (\ref{eq:minargmin}), the new convex relaxation
is the following:
\begin{equation}
\label{eq:newconvrel}
\begin{array}{lll}
\min & \sum_{h\in H} d_h^* w_h +\sum_{h\in H} \sum_{j\in N_h} (D_{jj}-d_h^*)x_j^2 +2\sum_{h\in H} \sum_{j\in N_h} c_j x_j & \\ [6pt]
& \sum_{h\in H} \xi_{ih} w_h +2\sum_{h\in H} \sum_{j\in N_h} a_{ij} x_j\leq b_i & i\in M \\ [6pt]
& \sum_{j\in N_h} x_j^2 \leq w_h & h\in H.  
\end{array}
\end{equation}
Note that for $|H|=|N|$ this is the same as the convex relaxation (\ref{eq:convrel}). But for $|H|<|N|$ this relaxation requires
the addition of a lower number of additional variables and of related convex quadratic constraints.
The KKT conditions for such relaxation are:
\begin{subequations}
\label{eq:kktrevnew}
\begin{align}
d_{h}^* + \sum_{i\in M} \mu_i \xi^{ih} -\gamma_h =0 &\ \ \  h\in H \\ 
 (D_{jj}-d_h^*)x_j+c_j +\sum_{i\in M} \mu_i a_{ij} +\gamma_h x_j = 0 &\ \ \  j\in N_h, \ h\in H \\ 
 \sum_{h\in H}  \xi^{ih} w_h  + 2 \sum_{h\in H} \sum_{j\in N_h}  a_{ij} x_j \leq b_i &\ \ \  i\in M \\
\sum_{j\in N_h} x_j^2 \leq w_h &\ \ \  h\in H \\ 
\mu_i\left(b_i-\sum_{h\in H}  \xi^{ih} w_h - 2 \sum_{h\in H} \sum_{j\in N_h} a_{ij} x_j \right)=0 &\ \ \  i\in M \label{eq:kktrevnew1}\\ 
\gamma_h (w_h-\sum_{j\in N_h} x_j^2 )=0 & \ \ \ h\in H \\
\mu_i, \gamma_h\geq 0 &\ \ \  i\in M,\ h\in H.
\end{align}
\end{subequations}
We prove the following proposition stating that the optimal value of the new convex relaxation (\ref{eq:newconvrel}) is equal to the optimal value of the original convex relaxation (\ref{eq:convrel}) (and, as a consequence, also of the Shor relaxation).
\begin{prop}
\label{prop:1bbb}
The optimal values of the convex relaxations (\ref{eq:convrel}) and (\ref{eq:newconvrel}) are equal.
\end{prop}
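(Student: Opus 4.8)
The plan is to establish the two inequalities ``$\text{optimal value of }(\ref{eq:newconvrel}) \le p^\star$'' and ``$p^\star \le \text{optimal value of }(\ref{eq:newconvrel})$'' by explicit constructions, mapping feasible solutions of one relaxation to feasible solutions of the other with a controlled objective value. The structural fact driving everything is the partition property (\ref{eq:partition}): inside each block $N_h$ the variables $z_j$ enter the constraints of (\ref{eq:convrel}) only through the aggregate $\sum_{j\in N_h} z_j$ (because $A^i_{jj}=\xi^{ih}$ is constant on $N_h$), whereas in the objective the cheapest way to realize a prescribed aggregate subject to $x_j^2 \le z_j$ is to set $z_j = x_j^2$ for all $j \neq j_h$ and load all remaining slack onto $z_{j_h}$, the index carrying the smallest coefficient $d_h^*$.

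First I would take an arbitrary feasible $({\bf x},{\bf z})$ of (\ref{eq:convrel}) and set $w_h=\sum_{j\in N_h} z_j$ for $h\in H$. Summing $x_j^2\le z_j$ over $j\in N_h$ gives $\sum_{j\in N_h} x_j^2 \le w_h$, and the $i$-th linear constraint of (\ref{eq:newconvrel}) coincides with that of (\ref{eq:convrel}) since $\sum_{h\in H}\xi^{ih} w_h=\sum_{h\in H}\sum_{j\in N_h} A^i_{jj} z_j=\sum_{j\in N} A^i_{jj} z_j$. Because $D_{jj}-d_h^*\ge 0$ on $N_h$ and $x_j^2\le z_j$, the objective of (\ref{eq:newconvrel}) at $({\bf x},{\bf w})$ is $\sum_{h\in H}\big[d_h^* w_h+\sum_{j\in N_h}(D_{jj}-d_h^*)x_j^2\big]+2\sum_{j\in N} c_j x_j \le \sum_{h\in H}\big[d_h^* w_h+\sum_{j\in N_h}(D_{jj}-d_h^*)z_j\big]+2\sum_{j\in N} c_j x_j = \sum_{j\in N} D_{jj} z_j + 2\sum_{j\in N} c_j x_j$, i.e. the objective of (\ref{eq:convrel}) at $({\bf x},{\bf z})$. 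Taking infima over feasible points yields the first inequality.

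Conversely I would take an arbitrary feasible $({\bf x},{\bf w})$ of (\ref{eq:newconvrel}) and define $z_j=x_j^2$ for $j\in N_h\setminus\{j_h\}$ and $z_{j_h}=w_h-\sum_{j\in N_h\setminus\{j_h\}}x_j^2$. The constraint $\sum_{j\in N_h}x_j^2\le w_h$ gives $z_{j_h}\ge x_{j_h}^2$, so $x_j^2\le z_j$ holds for every $j\in N$, and again $\sum_{j\in N_h} z_j=w_h$, so the linear constraints of (\ref{eq:convrel}) reproduce those of (\ref{eq:newconvrel}). Using $D_{j_h j_h}=d_h^*$ one checks directly that $\sum_{j\in N} D_{jj} z_j=\sum_{h\in H}\big[d_h^* w_h+\sum_{j\in N_h}(D_{jj}-d_h^*)x_j^2\big]$, so the objective of (\ref{eq:convrel}) at $({\bf x},{\bf z})$ equals the objective of (\ref{eq:newconvrel}) at $({\bf x},{\bf w})$. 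Taking infima gives $p^\star \le$ the optimal value of (\ref{eq:newconvrel}); combining the two inequalities proves the claim, and together with Theorem \ref{thm:1} also the equality with $v^\star$.

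There is no genuine obstacle here: the argument is a bookkeeping verification, and the only point needing a little care is the ``disaggregation'' step, namely that loading all the block slack onto $z_{j_h}$ still keeps $z_{j_h}\ge x_{j_h}^2$ — which is exactly what the new constraint $\sum_{j\in N_h} x_j^2 \le w_h$ delivers. If one prefers to argue with optimal rather than merely feasible solutions, one should first remark that under Assumption \ref{ass:1} both relaxations attain their optima; working with feasible solutions and infima as above sidesteps this altogether.
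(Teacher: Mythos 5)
Your proof is correct, but it takes a genuinely different route from the paper. You argue purely at the level of feasible points: you aggregate $w_h=\sum_{j\in N_h}z_j$ to pass from (\ref{eq:convrel}) to (\ref{eq:newconvrel}) with a non-increasing objective (using $D_{jj}-d_h^*\ge 0$ and $x_j^2\le z_j$), and you disaggregate by loading all block slack onto $z_{j_h}$ to pass back with equal objective; both verifications check out, including the key point that $\sum_{j\in N_h}x_j^2\le w_h$ guarantees $z_{j_h}\ge x_{j_h}^2$. The paper instead takes an optimal KKT point $({\bf x}^\star,{\bf z}^\star,\boldsymbol{\mu}^\star,\boldsymbol{\nu}^\star)$ of (\ref{eq:convrel}) and verifies that $({\bf x}^\star,{\bf w}^\star,\boldsymbol{\mu}^\star,\boldsymbol{\gamma}^\star)$ with $w_h^\star=\sum_{j\in N_h}z_j^\star$ and $\gamma_h^\star=\nu_{j_h}^\star$ is a KKT point of (\ref{eq:newconvrel}) with the same objective value, using the relation (\ref{eq:nuj}) and the fact that $\nu_j^\star>0$ forces $(x_j^\star)^2=z_j^\star$ for $j\neq j_h$. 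Your argument is more elementary and does not rely on Slater's condition or attainment of KKT points, which is a genuine advantage; the paper's KKT-based derivation, on the other hand, produces the explicit multiplier correspondence $\gamma_h=\nu_{j_h}$ between the two relaxations, which is what the subsequent exactness conditions (e.g., Propositions \ref{prop:equiv} and \ref{prop:exactref}) are built on. Either proof is acceptable for the proposition as stated.
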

\begin{proof}
We recall the KKT conditions (\ref{eq:KKT}) for the convex relaxation (\ref{eq:convrel}). 
Let $({\bf x}^\star,{\bf z}^\star,\boldsymbol{\mu}^\star,\boldsymbol{\nu}^\star)$ be a KKT point for that problem.
Recalling (\ref{eq:nuj}), we have: %Note that, by the first set of equations, (\ref{eq:nujinh}) holds, i.e.,
$$
\nu_j^\star=\nu_{j_h}^\star+(D_{jj}-d_h^*),\ \ \ j\in N_h \setminus \{j_h\}.
$$
Now, let us set for each $h\in H$:
$$
\begin{array}{l}
w_h^\star=\sum_{j\in N_h} z_j^\star \\ [6pt]
\gamma_h^\star=\nu_{j_h}^\star.
\end{array}
$$
Then, $({\bf x}^\star,{\bf w}^\star,\boldsymbol{\mu}^\star,\boldsymbol{\gamma}^\star)$ is a KKT point for the new relaxation.
Indeed, for each $h\in H$
$$
d_{h}^*  +\sum_{i\in M} \xi_{ih} \mu^\star_i-\nu_{j_h}^\star=d_h^*  +\sum_{i\in M} \xi_{ih} \mu^\star_i-\gamma_h^\star=0.
$$
For each $j\in N$:
$$
\begin{array}{l}
c_j + \sum_{i\in M} a_{ij} \mu^\star_i +\nu^\star_j x_j^\star=c_j + \sum_{i\in M} a_{ij} \mu^\star_i +(\nu^\star_{j_h}+D_{jj}-d_h^*) x_j^\star= \\ [8pt]
=c_j + \sum_{i\in M} a_{ij} \mu^\star_i +\gamma_h^\star x_j^\star+(D_{jj}-d_h^*) x_j^\star=0.
\end{array}
$$
For each $i\in M$:
$$
\sum_{h\in H} \xi_{ih} \sum_{j\in N_h} z_j ^\star + 2 \sum_{j\in N} a_{ij} x_j^\star =\sum_{h\in H}\xi_{ih}  w_h^\star + 2 \sum_{j\in N} a_{ij} x_j^\star    \leq  b_i,
$$
and
$$
\mu_i^*\left( \sum_{h\in H} \xi_{ih}\sum_{j\in N_h} z_j^\star + 2 \sum_{j\in N} a_{ij} x_j^\star - b_i\right)=\mu_i^*\left( \sum_{h\in H}\xi_{ih}  w_h^\star + 2 \sum_{j\in N} a_{ij} x_j^\star - b_i\right)=0.
$$
Moreover,
$$
(x_j^\star)^2 \leq z_j^\star \ \   j\in N_h\ \ \Rightarrow\ \ \sum_{j\in N_h}  (x_j^\star)^2\leq \sum_{j\in N_h}   z_j^\star\ \ \Rightarrow\ \ \sum_{j\in N_h}  (x_j^\star)^2\leq w_h^\star,
$$
while, for $j\in N_h\setminus \{j_h\}$, $\nu_j^\star>0$ holds, so that $(x_j^\star)^2 = z_j^\star$. Then, 
$$
\nu_{j_h}^\star[(x_j^\star)^2 - z_j^\star]=0\ \ \ \forall j\in N_h,
$$
and summing all these up:
$$
\nu_{j_h}^\star\left[\sum_{j\in N_h} (x_j^\star)^2 - \sum_{j\in N_h} z_j^\star\right]=\gamma_h^\star\left[\sum_{j\in N_h} (x_j^\star)^2 - w_h^\star\right]=0.
$$
It also holds that the objective function value of the original relaxation in $({\bf x}^\star,{\bf z}^\star)$ is equal to
the objective function value of the new relaxation in $({\bf x}^\star,{\bf w}^\star)$, thus establishing the equivalence between the two relaxations.
\end{proof}
If we consider the special case $|H|=1$, which includes the case when ${\bf A}_i \in \{{\bf I},-{\bf I}, {\bf O}\}$, then only
a single additional variable $w_1$ needs to be introduced. Without loss of generality, we assume that $j_1=1$.
Then, we have the following result.
\begin{prop}
\label{prop:equiv}
For $|H|=1$ the convex relaxation (\ref{eq:newconvrel}) is exact if the following convex set is empty:
\begin{equation}
\label{eq:newcond}
\begin{array}{ll}
d_{1}^*  +\sum_{i\in M} \xi_i \mu_i=0 & \\ [6pt]
c_1 + \sum_{i\in M} a_{i1} \mu_i=0 &  \\[6pt]
(D_{jj}-d_{1}^*) x_j + c_j + \sum_{i\in M} a_{ij} \mu_i=0 & j\in N\setminus \{1\} \\[6pt]
\xi_i w_1  + 2 \sum_{j\in N} a_{ij} x_j \leq b_i & i\in M \\[6pt]
\sum_{j\in N} x_j^2 \leq w_1 &   \\ [6pt]
\mu_i\geq 0 & i\in M.
\end{array}
\end{equation}
\end{prop}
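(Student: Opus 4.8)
The plan is to transplant to the KKT system (\ref{eq:kktrevnew}) of the relaxation (\ref{eq:newconvrel}), in the case $|H|=1$, the same idea already used for the set (\ref{eq:setcond}): \emph{force the last convex constraint to be active by showing its multiplier is positive}. The starting point is that (\ref{eq:newconvrel}) is a relaxation of (\ref{eq:qpproblem}). Indeed, for any ${\bf x}$ feasible for (\ref{eq:qpproblem}), the pair $({\bf x},w_1)$ with $w_1=\sum_{j\in N}x_j^2$ is feasible for (\ref{eq:newconvrel}), and since for $|H|=1$ one has $d_1^*w_1+\sum_{j\in N}(D_{jj}-d_1^*)x_j^2=\sum_{j\in N}D_{jj}x_j^2$ and $\xi_i w_1+2\sum_{j\in N}a_{ij}x_j=\sum_{j\in N}A^i_{jj}x_j^2+2\sum_{j\in N}a_{ij}x_j$, the two objective values coincide; hence the optimal value of (\ref{eq:newconvrel}) is at most $c^\star$. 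Reading these identities backwards yields the crux of the argument: if an optimal solution $({\bf x}^\star,w_1^\star)$ of (\ref{eq:newconvrel}) satisfies $\sum_{j\in N}(x_j^\star)^2=w_1^\star$, then ${\bf x}^\star$ is feasible for (\ref{eq:qpproblem}) with objective value equal to the optimal value of (\ref{eq:newconvrel}), so the relaxation is exact (equivalently, by Proposition \ref{prop:1bbb}, so is the Shor relaxation).

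Next I would check that (\ref{eq:newconvrel}) has an interior point, so that Slater's condition holds and its optimal solutions satisfy (\ref{eq:kktrevnew}): from the interior point $(diag(\bar{\bf X}),\bar{\bf x})$ of (\ref{eq:convrel}) provided by Assumption \ref{ass:1} one obtains the point $(\bar{\bf x},\sum_{j\in N}diag(\bar{\bf X})_j)$, and both constraint families of (\ref{eq:newconvrel}) hold there with strict inequality, the constraint $\sum_{j\in N}x_j^2\le w_1$ because $\bar x_j^2<diag(\bar{\bf X})_j$ for every $j$. Alternatively, the existence of an optimal KKT point of (\ref{eq:newconvrel}) can be read off the proof of Proposition \ref{prop:1bbb}, which maps an optimal KKT point of (\ref{eq:convrel}) to one of (\ref{eq:newconvrel}) with the same objective value. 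In either case, by the complementarity condition $\gamma_1(w_1-\sum_{j\in N}x_j^2)=0$ of (\ref{eq:kktrevnew}) together with the first paragraph, the relaxation (\ref{eq:newconvrel}) is exact as soon as $\gamma_1>0$ at some optimal KKT point.

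It then remains to show that $\gamma_1=0$ at an optimal KKT point $({\bf x}^\star,w_1^\star,\boldsymbol{\mu}^\star,\gamma_1^\star)$ forces the set (\ref{eq:newcond}) to be nonempty. Setting $\gamma_1=0$ in (\ref{eq:kktrevnew}) and recalling $j_1=1$, so that $D_{11}=d_1^*$ and the term $(D_{11}-d_1^*)x_1$ vanishes, the first two stationarity equations become exactly the first two equalities of (\ref{eq:newcond}), the stationarity equations for $j\in N\setminus\{1\}$ become its third family of equalities, and primal feasibility of $({\bf x}^\star,w_1^\star)$ together with $\boldsymbol{\mu}^\star\ge{\bf 0}$ supply the remaining inequalities; thus $({\bf x}^\star,w_1^\star,\boldsymbol{\mu}^\star)$ lies in (\ref{eq:newcond}). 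Note that the $\boldsymbol{\mu}$-complementarity conditions of (\ref{eq:kktrevnew}) are simply discarded in passing to (\ref{eq:newcond}), which only enlarges it, so emptiness of (\ref{eq:newcond}) is a sufficient (if in general not necessary) condition for exactness. Contrapositively, if (\ref{eq:newcond}) is empty then $\gamma_1>0$ at every optimal KKT point, and exactness follows from the first paragraph. Finally I would record that (\ref{eq:newcond}) is convex — every equality is affine in $(\boldsymbol{\mu},{\bf x})$, the sole nonlinear constraint $\sum_{j\in N}x_j^2\le w_1$ is convex, and the rest are affine inequalities in $(\boldsymbol{\mu},{\bf x},w_1)$ — so, unlike the set (\ref{eq:setspec}), testing its emptiness is a polynomial-time task.

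I do not expect a serious obstacle: the heart of the proof is the elementary substitution $w_1=\sum_{j\in N}x_j^2$, which is precisely the $|H|=1$ instance of the reduction already exploited for (\ref{eq:setcond}). The two places needing a little care are (i) ensuring one works with an optimal solution that genuinely satisfies (\ref{eq:kktrevnew}), i.e.\ verifying Slater for (\ref{eq:newconvrel}) or invoking the KKT-point correspondence in Proposition \ref{prop:1bbb}; and (ii) remembering to drop the $\boldsymbol{\mu}$-complementarity conditions when landing inside (\ref{eq:newcond}), which is what makes that set convex and keeps the stated condition sufficient rather than an exact characterization.
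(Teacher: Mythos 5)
Your proposal is correct and follows essentially the same route as the paper: the paper's own (two-line) proof likewise observes that non-exactness requires a KKT point of (\ref{eq:kktrevnew}) with the multiplier of $\sum_{j\in N}x_j^2\le w_1$ equal to zero, and that substituting this into the KKT system yields exactly the set (\ref{eq:newcond}). You have merely filled in the supporting details (Slater's condition, the vanishing of $(D_{11}-d_1^*)x_1$ since $j_1=1$, and the harmless dropping of the $\boldsymbol{\mu}$-complementarity conditions), all of which check out.
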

\begin{proof}
The relaxation (\ref{eq:newconvrel}) is not exact if $\nu_1=0$. Then, if the convex set (\ref{eq:newcond}) is empty, it follows from (\ref{eq:kktrevnew}) with $|H|=1$, $N_1=N$, that no KKT point with $\nu_1=0$ exists.
\end{proof}
Note that this condition can be checked more efficiently than the one stated in Section \ref{sec:h=1} (with $j_1=1$) (since (\ref{eq:newcond}) is a convex set) and is at least as strong as that condition. % one stated in Section \ref{sec:h=1} (with $j_1=1$).
Indeed, if $(\bar{\boldsymbol{\mu}},\bar{{\bf x}},\bar{w}_1)$ belongs to the set (\ref{eq:newcond}), then
$(\bar{\boldsymbol{\mu}},\bar{{\bf x}},\bar{{\bf z}})$, where
$$
\bar{z}_j=\bar{x}_j^2,\ \ \ j\neq 1,\ \ \ \bar{z}_1=\bar{w}_1-\sum_{j\neq 1} \bar{x}_j^2,
$$
belongs to the set (\ref{eq:setspec}).
\newline\newline\noindent
In fact, in (\ref{eq:newcond}) we could replace $\sum_{j\in N} x_j^2 \leq w$ with $\sum_{j\in N} x_j^2 < w$. Indeed, if the set defined in (\ref{eq:newcond}) is not empty but only contains points for which equality holds, then the relaxation is still exact.
Thus, we could reformulate Proposition \ref{prop:equiv} in this slightly stronger way.
\begin{prop}
\label{prop:equiv1}
For $|H|=1$ the convex relaxation (\ref{eq:newconvrel}) is exact if the following convex problem has a nonnegative optimal value.
\begin{equation}
\label{eq:newcond1}
\begin{array}{lll}
\min & \sum_{j\in N} x_j^2 - w_1 & \\ [6pt]
& d_{1}^*  +\sum_{i\in M} \xi_i \mu_i=0 & \\ [6pt]
& c_1 + \sum_{i\in M} a_{i1} \mu_i=0 &  \\[6pt]
& (D_{jj}-d_{1}^*) x_j + c_j + \sum_{i\in M} a_{ij} \mu_i=0 & j\in N\setminus \{1\} \\[6pt]
& \xi_i w_1  + 2 \sum_{j\in N} a_{ij} x_j \leq b_i & i\in M \\[6pt]
& \mu_i\geq 0 & i\in M.
\end{array}
\end{equation}
\end{prop}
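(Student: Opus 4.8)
The plan is to derive the statement from Proposition \ref{prop:equiv} together with the strengthening observation recorded just above it, namely that the relaxation (\ref{eq:newconvrel}) is still exact when the set (\ref{eq:newcond}), if nonempty, contains only points at which the constraint $\sum_{j\in N} x_j^2\le w_1$ is satisfied with equality. So the first step is to identify the feasible region of the convex problem (\ref{eq:newcond1}): its constraints are exactly the constraints defining the set (\ref{eq:newcond}), with the single exception of $\sum_{j\in N} x_j^2\le w_1$, which has been moved into the objective. Consequently the set (\ref{eq:newcond}) is precisely the subset of the feasible region of (\ref{eq:newcond1}) on which $\sum_{j\in N} x_j^2-w_1\le 0$.

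Second, I would exploit the hypothesis that the optimal value of (\ref{eq:newcond1}) is nonnegative. Let $(\boldsymbol{\mu},{\bf x},w_1)$ be any point of the set (\ref{eq:newcond}). Since it is feasible for (\ref{eq:newcond1}), the hypothesis gives $\sum_{j\in N} x_j^2-w_1\ge 0$; since it lies in (\ref{eq:newcond}), it also satisfies $\sum_{j\in N} x_j^2-w_1\le 0$. Hence $\sum_{j\in N} x_j^2=w_1$ at every point of (\ref{eq:newcond}). In other words, either (\ref{eq:newcond}) is empty, or all of its points satisfy the quadratic constraint with equality.

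Third, I would run the exactness argument behind Proposition \ref{prop:equiv} in its strengthened form. Under Assumption \ref{ass:1} Slater's condition holds for (\ref{eq:newconvrel}) with $|H|=1$ (just as noted for the analogous relaxation (\ref{eq:convrel})), so every optimal solution of (\ref{eq:newconvrel}) is a KKT point of the system (\ref{eq:kktrevnew}) with $N_1=N$, $j_1=1$. At such an optimal KKT point, if $\gamma_1>0$ then complementarity forces $w_1=\sum_{j\in N} x_j^2$ directly; if instead $\gamma_1=0$, then the remaining relations of (\ref{eq:kktrevnew}) (discarding the $\mu$-complementarity conditions, which only shrink the set) show that $(\boldsymbol{\mu},{\bf x},w_1)$ belongs to the set (\ref{eq:newcond}), and by the previous step it then again satisfies $w_1=\sum_{j\in N} x_j^2$. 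In all cases every optimal solution of (\ref{eq:newconvrel}) has $w_1=\sum_{j\in N} x_j^2$, so its ${\bf x}$-component is feasible for (\ref{eq:qpproblem}) with the same objective value; hence (\ref{eq:newconvrel}) is exact, and by Proposition \ref{prop:1bbb} and Theorem \ref{thm:1} so is the Shor relaxation.

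Once the earlier results are in hand the proof is essentially bookkeeping; the only delicate point is the logical step that "every point of (\ref{eq:newcond}) is an equality point" really does imply exactness, i.e.\ that an optimal KKT point with $\gamma_1=0$ is guaranteed to lie in (\ref{eq:newcond}) so that this non-strict convex set still controls all the relevant KKT points. This is exactly what the remark preceding the proposition asserts, so the main obstacle is making that remark precise rather than any genuine analytical difficulty.
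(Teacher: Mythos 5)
Your proposal is correct and follows exactly the route the paper intends: the paper justifies this proposition via the remark preceding it (points of (\ref{eq:newcond}) where $\sum_{j\in N}x_j^2=w_1$ do not destroy exactness), and your three steps simply make that remark precise using the KKT system (\ref{eq:kktrevnew}) and the argument of Proposition \ref{prop:equiv}. No gaps; the case split on $\gamma_1>0$ versus $\gamma_1=0$ and the identification of the feasible set of (\ref{eq:newcond1}) with (\ref{eq:newcond}) minus the quadratic constraint are exactly what is needed.
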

Up to now we have basically ignored the complementarity conditions (\ref{eq:kktrevnew1}). We can strengthen 
condition (\ref{eq:newcond1}) by taking them into account. We first notice that for each $j\in N\setminus\{1\}$ we can set
$$
x_j(\boldsymbol{\mu})=-\frac{c_j + \sum_{i\in M} a_{ij} \mu_i}{D_{jj}-d_{1}^*}.
$$
Note that, possibly after the application of the perturbation argument, we must have that at least two $\mu$ values are strictly positive.
Indeed, both the equation  $d_{1}^*  +\sum_{i\in M} \xi_i \mu_i=0$ and the equation  $c_1 + \sum_{i\in M} a_{i1} \mu_i=0$ must be fulfilled and, possibly after an arbitrarily small perturbation of the data, such equations can not be fulfilled
if all but one of the $\mu$ values are equal to 0.
In view of (\ref{eq:kktrevnew1}) for positive $\mu$ values the corresponding inequality constraints are active. Thus, we can enumerate all possible
$I\subseteq M$ with cardinality two and ask that the following convex problems have nonnegative optimal value:
$$
\begin{array}{lll}
\min & \sum_{j\in N\setminus\{1\}}  x_j(\boldsymbol{\mu})^2 +x_1^2 - w_1 & \\ [6pt]
&d_{1}^*  +\sum_{i\in M} \xi_i \mu_i=0 & \\ [6pt]
&c_1 + \sum_{i\in M} a_{i1} \mu_i=0 &  \\[6pt]
&\xi_i w_1  + 2 a_{i1} x_1 +2\sum_{j\in N\setminus\{1\}} a_{ij} x_j(\boldsymbol{\mu}) = b_i & i\in I \\[6pt]
&\xi_i w_1  + 2 a_{i1} x_1 +2\sum_{j\in N\setminus\{1\}} a_{ij} x_j(\boldsymbol{\mu})\leq  b_i & i\in M\setminus I \\[6pt]
%\sum_{j\in N} x_j^2 \leq w &   \\ [6pt]
& \mu_i\geq 0 & i\in M.
\end{array}
$$
We notice that from the two equations associated to subset $I$, we can compute the solution of the corresponding linear system with the two unknowns $x_1$ and $w_1$, which is unique possibly after the application of the perturbation argument,  and we denote it by $(x_1(\boldsymbol{\mu}), w_1(\boldsymbol{\mu}))$, where
both $x_1(\boldsymbol{\mu})$ and $w_1(\boldsymbol{\mu})$ are affine functions of $\boldsymbol{\mu}$.
Then, we proved the following exactness condition.
\begin{prop}
\label{prop:exactref}
For $|H|=1$ the Shor relaxation is exact if for all $I\subset M$ with $|I|=2$, each of the following convex problems either has empty feasible region or has nonnegative optimal value:
\begin{equation}
\label{eq:auxprob}
\begin{array}{lll}
\min & \sum_{j\in N} x_j(\boldsymbol{\mu})^2 - w_1(\boldsymbol{\mu}) & \\ [6pt]
&d_{1}^*  +\sum_{i\in M} \xi_i \mu_i=0 & \\ [6pt]
&c_1 + \sum_{i\in M} a_{i1} \mu_i=0 &  \\[6pt]
&\xi_i w_1(\boldsymbol{\mu})  +2\sum_{j\in N} a_{ij} x_j(\boldsymbol{\mu})\leq  b_i & i\in M\setminus I \\[6pt]
%\sum_{j\in N} x_j^2 \leq w &   \\ [6pt]
& \mu_i\geq 0 & i\in M.
\end{array}
\end{equation}
\end{prop}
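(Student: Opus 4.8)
The plan is to deduce the statement from Proposition~\ref{prop:equiv1} by exploiting the complementarity conditions~(\ref{eq:kktrevnew1}) more aggressively. By Theorem~\ref{thm:1} and Proposition~\ref{prop:1bbb}, exactness of the Shor relaxation is equivalent to exactness of the convex relaxation~(\ref{eq:newconvrel}), and, since Slater's condition holds under Assumption~\ref{ass:1}, every optimal solution of~(\ref{eq:newconvrel}) is a KKT point, i.e.\ satisfies~(\ref{eq:kktrevnew}) with $|H|=1$, $N_1=N$, $j_1=1$. As already used in the proofs of Proposition~\ref{prop:equiv} and Proposition~\ref{prop:equiv1}, exactness holds as soon as \emph{some} optimal solution of~(\ref{eq:newconvrel}) has the constraint $\sum_{j\in N}x_j^2\le w_1$ active (plug such a point into the objective and the constraints and use $A^i_{jj}=\xi_i$ for all $j$). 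Hence I argue by contradiction: if exactness fails, then at every optimal solution this constraint is strictly slack, and by the complementarity condition on $\gamma_1$ this forces $\gamma_1=0$.

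Fix one such KKT point $(\mathbf{x}^\star,w_1^\star,\boldsymbol{\mu}^\star,\gamma_1^\star)$ with $\gamma_1^\star=0$ and $\sum_{j\in N}(x_j^\star)^2<w_1^\star$. From the first two equations of~(\ref{eq:kktrevnew}) with $\gamma_1=0$ we obtain $d_1^*+\sum_{i\in M}\mu_i^\star\xi_i=0$ and $c_1+\sum_{i\in M}\mu_i^\star a_{i1}=0$, and, exactly as in Section~\ref{sec:h=1}, $x_j^\star=x_j(\boldsymbol{\mu}^\star)$ for every $j\in N\setminus\{1\}$. Invoking the perturbation argument of Proposition~\ref{prop:perturb}, I may assume the data generic; then those two equations cannot both hold with at most one positive multiplier, so $|\{i\in M:\mu_i^\star>0\}|\ge 2$. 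Choose any $I\subseteq\{i\in M:\mu_i^\star>0\}$ with $|I|=2$. By~(\ref{eq:kktrevnew1}) the two constraints indexed by $I$ are active at $(\mathbf{x}^\star,w_1^\star)$; substituting $x_j^\star=x_j(\boldsymbol{\mu}^\star)$ for $j\ne 1$, these two active constraints become a $2\times 2$ linear system in the unknowns $(x_1,w_1)$ whose coefficient matrix is, generically, nonsingular, so its unique solution is precisely $(x_1(\boldsymbol{\mu}^\star),w_1(\boldsymbol{\mu}^\star))$ in the notation preceding the statement; hence $x_1^\star=x_1(\boldsymbol{\mu}^\star)$ and $w_1^\star=w_1(\boldsymbol{\mu}^\star)$.

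It then remains to observe that $\boldsymbol{\mu}^\star$ is feasible for the problem~(\ref{eq:auxprob}) associated with this $I$: the two equality constraints there are the equations just derived; the inequalities for $i\in M\setminus I$ are the remaining original constraints of~(\ref{eq:newconvrel}) rewritten through the affine maps $x_j(\boldsymbol{\mu}^\star)$, $w_1(\boldsymbol{\mu}^\star)$, and they hold because $(\mathbf{x}^\star,w_1^\star)$ is feasible; and $\boldsymbol{\mu}^\star\ge\mathbf{0}$. The objective of~(\ref{eq:auxprob}) at $\boldsymbol{\mu}^\star$ equals $\sum_{j\in N}x_j(\boldsymbol{\mu}^\star)^2-w_1(\boldsymbol{\mu}^\star)=\sum_{j\in N}(x_j^\star)^2-w_1^\star<0$, so that feasible problem has a strictly negative optimal value, contradicting the hypothesis. (If $|M|<2$ there is no admissible $I$; this is the GTRS case, and indeed the two displayed equations cannot both hold generically, so no KKT point with $\gamma_1=0$ exists and exactness is immediate.) The delicate step is the genericity/perturbation part: one must make sure that for the perturbed problems the relevant KKT points really do have at least two positive multipliers and that each selected $2\times 2$ system is nonsingular, so that the affine maps $x_j(\cdot)$, $x_1(\cdot)$, $w_1(\cdot)$ reproduce the KKT point faithfully and the condition on~(\ref{eq:auxprob}) transfers across the perturbation; everything else is bookkeeping.
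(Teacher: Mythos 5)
Your proof is correct and follows essentially the same route as the paper, which establishes the proposition through the discussion immediately preceding it: a KKT point with $\gamma_1=0$ and slack quadratic constraint forces the two equalities, hence (generically) at least two positive multipliers, hence two active constraints that determine $(x_1,w_1)$ affinely in $\boldsymbol{\mu}$, yielding a feasible point of (\ref{eq:auxprob}) with negative objective. Your explicit flagging of the perturbation-transfer step is a fair observation, but the argument is the paper's own, merely recast as a proof by contradiction.
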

In what follows we provide an example where exactness cannot be established by the result stated in Section \ref{sec:h=1} but can be established by Proposition \ref{prop:exactref}.
\begin{exam}
Let us consider again problem (\ref{eq:param}) from Example \ref{ex:1}.
The convex relaxation (\ref{eq:newconvrel}) of that problem is:
$$
\begin{array}{ll}
\min & -w_1+\frac{1}{2} x_2^2 +x_2 \\ [6pt]
& w_1 +x_1-x_2 \leq 2 \\ [6pt]
& -x_1+x_2\leq \xi \\ [6pt]
& x_1^2+x_2^2\leq w_1. 
\end{array}
$$
%The feasible region is not empty. 
%Now, the set (\ref{eq:setspec}) in this case is:
%$$
%\begin{array}{l}
%-1+\mu_1=1 \\ [6pt]
%\mu_1-\mu_2=0 \\ [6pt]
% z_1+1 +x_1+1 \leq 2 \\ [6pt]
% -x_1-1\leq 1 \\ [6pt]
%x_1^2\leq z_1,
%\end{array}
%$$
%which contains, e.g., the point $\mu_1=\mu_2=1, x_1=-\frac{1}{2},z_1=\frac{1}{2}$.
%Thus, the exactness condition stated in Section \ref{sec:h=1} does not hold. 
%Note that exactness cannot be established by the conditions proposed in  \cite{BurerYe19}, \cite{Jeyakumar18} and \cite{Karzan20}. Indeed, these conditions do not depend on the right-hand sides of the constraints and, thus, they do not hold since we have already shown that they do not hold for Example \ref{ex:1}. For what concerns Proposition \ref{prop:exactref}, we first notice that we can only take $I=\{1,2\}$, so that in problem (\ref{eq:auxprob}) we have that $M\setminus I=\emptyset$,
%while $\mu_1=\mu_2=1$, $x_1(\mu_1,\mu_2)=-2$, $x_2(\mu_1,\mu_2)=-1$, and $w_1(\mu_1,\mu_2)=3$. Then, the optimal value of problem (\ref{eq:auxprob}) is equal to 2 and, thus, exactness holds.
%Note that, since $|M|=2$, here we could also have employed the exactness condition stated in Section \ref{sec:M=2}.
As already discussed, the exactness condition stated in Section \ref{sec:h=1} does not hold for all $\xi\geq -1$. 
Also recall that exactness cannot be established by the conditions proposed in  \cite{BurerYe19}, \cite{Jeyakumar18} and \cite{Karzan20} for all possible $\xi$ values, since  these conditions do not depend on the right-hand sides of the constraints.
For what concerns Proposition \ref{prop:exactref}, we first notice that we can only take $I=\{1,2\}$, so that in problem (\ref{eq:auxprob}) we have that $M\setminus I=\emptyset$,
while $\mu_1=\mu_2=1$, $x_1(\mu_1,\mu_2)=-1-\xi$, $x_2(\mu_1,\mu_2)=-1$, and $w_1(\mu_1,\mu_2)=2+\xi$. Then, the optimal value of problem (\ref{eq:auxprob}) is equal to $\xi^2+\xi$ and, thus, exactness holds for all
$\xi\leq -1$ and all $\xi\geq 0$. 
Note that, since $|M|=2$, here we could also have employed the exactness condition stated in Section \ref{sec:M=2}.
For $\xi\in (-1,0)$ the exactness condition does not hold but, actually, this happens since the bound provided by the convex relaxation in these cases is not tight. Indeed, the optimal value of the convex relaxation is equal to $-\frac{5}{2}-\xi$, attained at the given point $x_1=-1-\xi$, $x_2=-1$, $w_1=2+\xi$, 
while the optimal value of problem (\ref{eq:param}) can be seen to be equal to:
$$
-\frac{3}{2}-\frac{\xi}{4}-\frac{1}{2}\left(1+\frac{\xi}{2}\right)\sqrt{4+2\xi-\xi^2},
$$ 
attained at point 
$$
x_1^*=\frac{-\xi-\sqrt{4+2\xi-\xi^2}}{2},\ \ \ x_2^*=\frac{\xi-\sqrt{4+2\xi-\xi^2}}{2},
$$
where both constraints are active.
\end{exam}
We could go even further with a complete enumeration of
all subsets $I\subseteq M$ and assuming $\mu_i=0$ for all $i\in M\setminus I$. More precisely, 
let $2^M$ be the power set of $M$. Then, the relaxation is exact if for each $I\in 2^M$, $|I|\geq 2$, the following convex problems have nonnegative optimal values:
$$
\begin{array}{lll}
\min & \sum_{j\in N\setminus \{1\}}  x_j(\boldsymbol{\mu})^2 +x_1^2 - w_1 & \\ [6pt]
&d_{1}^*  +\sum_{i\in I} \xi_i \mu_i=0 & \\ [6pt]
&c_1 + \sum_{i\in I} a_{i1} \mu_i=0 &  \\[6pt]
%&(D_{jj}-d_{1}^*) x_j + c_j + \sum_{i\in I} a_{ij} \mu_i=0 & j\in N\setminus \{1\} \\[6pt]
&\xi_i w_1  + 2 a_{i1} x_1 +2\sum_{j\in N\setminus\{1\}} a_{ij} x_j(\boldsymbol{\mu}) = b_i & i\in I \\[6pt]
&\xi_i w_1  + 2 a_{i1} x_1 +2\sum_{j\in N\setminus\{1\}} a_{ij} x_j(\boldsymbol{\mu})\leq  b_i & i\in M\setminus I \\[6pt]
%\sum_{j\in N} x_j^2 \leq w &   \\ [6pt]
& \mu_i\geq 0 & i\in I.
\end{array}
$$
The obvious drawback of this condition is that it becomes unpractical when $|M|$ is large.

\section{Conclusion}
In this work we have shown that exactness results for the Shor relaxation of diagonal QCQPs 
can be derived by first proving the equivalence of this relaxation with two convex quadratic relaxations, and then by analyzing the KKT systems of these convex relaxations.
All this allows to re-derive previous exactness results in the literature and, in some cases, to strengthen them into primal-dual exactness conditions, i.e., conditions based both on the original (primal) variables of the convex relaxations and on the dual variables (Lagrange multipliers). As  a possible topic for future research we mention the possibility of extending the exactness results to non-diagonal QCQPs. In fact, as already mentioned, the result in \cite{Karzan20} already covers  some non-diagonal cases. It could be interesting to see whether the derivation discussed in this paper could be extended, e.g., to block diagonal QCQPs, by first proving the equivalence between the Shor relaxation and a convex program where  a distinct semidefinite condition is imposed for each distinct block, and then deriving optimality conditions for the convex program.

%\bibliographystyle{spmpsci} 
%\bibliography{ExactBiblio}  

\begin{thebibliography}{9}
\bibitem{Ai09} W. Ai and S. Zhang, Strong duality for the CDT subproblem: A necessary and sufficient condition, 
{\em SIAM Journal on Optimization}, 19(4), 1735--1756 (2009)
\bibitem{Beck17} A. Beck and D. Pan, A branch and bound algorithm for nonconvex quadratic optimization with ball and linear constraint, {\em Journal of Global Optimization}, 69, 309--342 (2017)
\bibitem{BenTal14} A. Ben-Tal and D. den Hertog, Hidden conic quadratic representation of some nonconvex
quadratic optimization problems, {\em Mathematical Programming}, 143, 1--29 (2014)
\bibitem{Bienstock16} D. Bienstock, A note on polynomial solvability of the CDT problem,
{\em SIAM Journal on Optimization}, 26, 488--498 (2016)
\bibitem{Burer13} S. Burer and K.M. Anstreicher, Second-order-cone constraints for extended trust-region subproblems, {\em SIAM Journal on Optimization},
23(1), 432--451 (2013)
\bibitem{Burer15} S. Burer and B. Yang, The Trust Region Subproblem with non-intersecting linear
constraints, {\em Mathematical Programming}, 149(1), 253--264 (2015)
\bibitem{BurerYe19} S. Burer and Y. Ye, Exact semidefinite formulations for a
class of (random and non-random) nonconvex quadratic programs, {\em Mathematical Programming}, 181, 1--17 (2020)
\bibitem{Consolini17} L. Consolini and M.Locatelli, On the complexity of quadratic programming with two quadratic constraints,
{\em Mathematical Programming}, 164, 91--128 (2017)
\bibitem{Fradkov79} A. L. Fradkov and V. A. Yakubovich, The S-procedure and duality relations in
nonconvex problems of quadratic programming, {\em Vestnik Leningrad Univ. Math.}, 6,
101–-109 (1979)
\bibitem{Fukuda00} M. Fukuda, M. Kojima, K. Murota  and K. Nakata,  Exploiting sparsity in semidefinite programming via matrix completion I: General framework, {\em SIAM Journal on Optimization}, 11(3), 647--674 (2000)
\bibitem{Jeyakumar14} V. Jeyakumar and G. Li., Trust-region problems with linear inequality constraints: Exact
SDP relaxation, global optimality and robust optimization, {\em Mathematical Programming}, 147
(1), 171–-206 (2014)
\bibitem{Jeyakumar18} V.Jeyakumar and G. Li, Exact second-order cone programming relaxations for some nonconvex minimax quadratic optimization problems, {\em SIAM Journal on Optimization}, 28(1), 760--787 (2018)
%\bibitem{Locatelli15} M. Locatelli, Some results for quadratic problems with one or two quadratic constraints,
%{\em Operations Research Letters}, 43(2), 126--131 (2015)
\bibitem{Locatelli16} M. Locatelli, Exactness conditions for an SDP relaxation
of the extended trust region problem, {\em Optimization Letters}, 10, 1141--1151 (2016)
\bibitem{Sakaue16} S. Sakaue,  Y. Nakatsukasa, A.Takeda, A. and S. Iwata, Solving generalized CDT problems via two-parameter eigenvalues,
{\em SIAM Journal on Optimization}, 26, 1669--1694 (2016)
\bibitem{Shor87} N. Shor, Quadratic optimization problems, {\em Soviet J. Comput. Syst. Sci.}, 25, 1--11 (1987)
\bibitem{Sojoudi14} S. Sojoudi and J. Lavaei,  Exactness of semidefinite relaxations for nonlinear optimization
problems with underlying graph structure, {\em SIAM Journal on Optimization}, 24(4):1746--1778 (2014)
\bibitem{Sturm03} J.F. Sturm and S. Zhang, On cones of nonnegative quadratic functions, {\em Mathematics of Operations Research}, 28(2), 246--267 (2003)
\bibitem{Yang16} B. Yang, S. Burer, A two-varable approach to the two-trust region subproblem, {\em SIAM Journal on Optimization}, 26(1), 661--680 (2016)
\bibitem{Ye03} Y. Ye and S. Zhang, New results on quadratic minimization, {\em SIAM Journal on Optimization}, 14, 245--267 (2003)
\bibitem{Yuan17}  J. Yuan, M. Wang, W. Ai, T. Shuai, New results on narrowing the duality gap on the extended Celis-Dennis-Tapia problem, {\em SIAM Journal on Optimization}, 27(2), 890--909 (2017)
\bibitem{Karzan20} A.L. Wang and F.  Kilin\c c-Karzan, On the tightness of SDP relaxations of QCQPs, \url{https://arxiv.org/abs/1911.09195}
\end{thebibliography}

\end{document}